\newtheorem{theorem}{Theorem}[section]
\newtheorem*{theorem*}{Theorem}
\newtheorem{lemma}[theorem]{Lemma}
\newtheorem{proposition}[theorem]{Proposition}
\newtheorem*{proposition*}{Proposition}
\newtheorem{corollary}[theorem]{Corollary}
\newtheorem*{corollary*}{Corollary}
\newtheorem{cit}[theorem]{Citation}
\newtheorem*{conjecture*}{Conjecture}
\newtheorem*{question*}{Question}
\newtheorem*{main:BNSR_BF}{Theorem~\ref{thrm:BNSR_BF}}
\newtheorem*{main:cat0}{Theorem~\ref{thrm:X_CAT0}, Remark~\ref{rmk:Vbr_CAT0}}
\newtheorem*{main:hyp}{Theorem~\ref{thrm:hyp}}
\theoremstyle{definition}
\newtheorem{definition}[theorem]{Definition}
\newtheorem{remark}[theorem]{Remark}
\newcommand{\Z}{\mathbb{Z}}
\newcommand{\N}{\mathbb{N}}
\newcommand{\R}{\mathbb{R}}
\newcommand{\arcs}{\mathcal{A}}
\newcommand{\matcharc}{\mathcal{MA}}
\newcommand{\lk}{\operatorname{lk}}
\newcommand{\dlk}{\lk^\downarrow\!}
\newcommand{\alk}{\lk^\uparrow\!}
\newcommand{\defeq}{\mathbin{\vcentcolon =}}
\newcommand{\ty}{\texttt}
\DeclareMathOperator{\Hom}{Hom}
\DeclareMathOperator{\Symm}{Symm}
\DeclareMathOperator{\F}{F}
\DeclareMathOperator{\CAT}{CAT}
\DeclareMathOperator{\Stab}{Stab}
\newcommand{\optionalarg}[2]{
\ifthenelse{\equal{#2}{}}{%
#1}{%
#1(#2)}
}
\newcommand{\Fbr}%
   {F_{\operatorname{br}}}                 
\newcommand{\Vbr}%
   {V_{\operatorname{br}}}                 
\newcommand{\Pbr}%
   {P_{\operatorname{br}}}                 
\numberwithin{equation}{section}
\begin{document}

\title{Geometric structures related to the braided Thompson groups}
\date{\today}
\subjclass[2010]{Primary 20F65;   
                 Secondary 20F36, 57M07}           

\keywords{Thompson group, braid group, BNSR-invariant, finiteness properties, cube complex, CAT(0)}

\author{Matthew C.~B.~Zaremsky}
\address{Department of Mathematics and Statistics, University at Albany (SUNY), Albany, NY 12222}
\email{mzaremsky@albany.edu}

\begin{abstract}
In previous work, joint with Bux, Fluch, Marschler and Witzel, we proved that the braided Thompson groups are of type $\F_\infty$. The proof utilized certain contractible cube complexes, which in this paper we prove are $\CAT(0)$. We then use this fact to compute the geometric invariants $\Sigma^m(\Fbr)$ of the pure braided Thompson group $\Fbr$. Only the first invariant $\Sigma^1(\Fbr)$ was previously known. A consequence of our computation is that as soon as a subgroup of $\Fbr$ containing the commutator subgroup $[\Fbr,\Fbr]$ is finitely presented, it is automatically of type $\F_\infty$.
\end{abstract}

\maketitle
\thispagestyle{empty}

\section*{Introduction}

A \emph{classifying space} for a group $G$ is a CW complex $X$ with $\pi_1(X)\cong G$ and $\pi_k(X)=0$ for all $k\ne 1$. We say a group is of \emph{type $F_n$} if it admits a classifying space with compact $n$-skeleton, and \emph{type $F_\infty$} if it is of type $\F_n$ for all $n$. In \cite{bux16} it was proved that the braided Thompson groups $\Vbr$ and $\Fbr$ are of type $\F_\infty$. The proof involved establishing contractible cube complexes for the groups to act on nicely.

In this paper we return to these cube complexes, and prove that they are $\CAT(0)$. We then use this to compute the higher Bieri--Neumann--Strebel--Renz (BNSR) invariants of $\Fbr$. The BNSR-invariants of a group $G$ are a sequence of geometric invariants
\[
\Sigma^1(G)\supseteq \Sigma^2(G)\supseteq \Sigma^3(G)\supseteq \cdots
\]
with $\Sigma^m(G)$ defined whenever $G$ is of type $\F_m$, which encode a vast amount of information about certain subgroups of $G$. In particular they precisely determine which subgroups of $G$ containing $[G,G]$ are of type $\F_m$. The first invariant, $\Sigma^1(G)$, is known as the Bieri--Neumann--Strebel (BNS) invariant. The BNS-invariant was developed by Bieri, Neumann and Strebel in \cite{bieri87}, and the higher invariants by Bieri and Renz in \cite{bieri88}. In general the BNSR-invariants of a group are extremely difficult to compute, and for groups where the problem is relevant a complete picture exists only in very few cases. For example, for the pure braid groups $PB_n$, Koban, McCammond and Meier fully computed the BNS-invariant $\Sigma^1(PB_n)$ for all $n$ \cite{koban15}, but even a full computation of $\Sigma^2(PB_4)$ remains open \cite{zaremsky17sepPB}.

In \cite{zaremsky18} the BNS-invariant $\Sigma^1(\Fbr)$ of the pure braided Thompson group $\Fbr$ was computed. It consists of a $3$-sphere with two points removed. In this paper we compute all the higher BNSR-invariants $\Sigma^m(\Fbr)$, which turn out to coincide for all $m\ge 2$. They consist of the $3$-sphere minus the convex hull of the two points missing from $\Sigma^1(\Fbr)$. This shows that in terms of BNSR-invariants, $\Fbr$ exhibits behavior similar to the generalized Thompson groups $F_{n,\infty}$ (see \cite{zaremsky17}).

\medskip

Our main result is the following (with notation explained in Section~\ref{sec:bnsr_BF}).

\begin{main:BNSR_BF}
For $m\ge 2$, the BNSR-invariant $\Sigma^m(\Fbr)$ for $\Fbr$ consists of all points on the sphere $\Sigma(\Fbr)=S^3$ except for the convex hull of $[\phi_{\ty{0}}]$ and $[\phi_{\ty{1}}]$.
\end{main:BNSR_BF}

A consequence of this (Corollary~\ref{cor:fp_Finfty}) is that every finitely presented subgroup of $\Fbr$ containing the commutator subgroup $[\Fbr,\Fbr]$ is of type $\F_\infty$.

A key step in proving Theorem~\ref{thrm:BNSR_BF} (more precisely, it leads to the crucial Corollary~\ref{cor:Fbrn_Finfty}) is the following result, which is of independent interest:

\begin{main:cat0}
The Stein--Farley cube complexes for $\Vbr$ and $\Fbr$ are $\CAT(0)$.
\end{main:cat0}

The property of a metric space being $\CAT(0)$ is an important notion of non-positive curvature that is fundamental in geometric group theory, and this result connects the braided Thompson groups to this world.

This paper is organized as follows. We recall some background in Section~\ref{sec:background} and discuss our groups of interest in Section~\ref{sec:gps}. In Sections~\ref{sec:cubes} and~\ref{sec:arc} we consider the Stein--Farley cube complexes and prove Theorem~\ref{thrm:X_CAT0}. Finally in Section~\ref{sec:bnsr_BF} we compute the $\Sigma^m(\Fbr)$.

\subsection*{Acknowledgments} Thanks are due to Javier Aramayona and Rodrigo de Pool for catching a mistake in a previous version of this paper.

\section{Background}\label{sec:background}

In this section we collect some background material on $\CAT(0)$ cube complexes, ascending HNN-extensions, BNSR-invariants and Morse theory.

\subsection{$\CAT(0)$ cube complexes}\label{sec:geom}

A \emph{cube complex} is an affine cell complex in which the cells are cubes, such that the intersection of two cubes is either empty or is a face of each (this is called ``cubical complex'' in \cite[Definition~I.7.32]{bridson99}). Note that links in a cube complex are simplicial complexes. For the technical definition of \emph{$CAT(0)$ space} see, e.g., \cite[Definition~II.1.1]{bridson99}. We will only be concerned with $\CAT(0)$ cube complexes, for which we have the following characterization that we can take to be a definition:

\begin{definition}[$\CAT(0)$ cube complex]\label{def:cat0}
A simply connected cube complex $X$ is called \emph{$CAT(0)$} if the link of every $0$-cube is a flag complex.
\end{definition}

In the finite dimensional case the equivalence of this definition to the technical one is due to Gromov (see \cite{gromov87} and \cite[Theorem~II.5.20]{bridson99}), and the assumption of finite dimensionality is alleviated by \cite[Theorem~B.8]{leary13}.

It is a classical fact that $\CAT(0)$ spaces are contractible \cite[Corollary~II.1.5]{bridson99}.

\begin{definition}[Combinatorially convex]\cite[Definition~B.2]{leary13}
A subcomplex $Y$ of a $\CAT(0)$ cube complex $X$ is called \emph{combinatorially convex} if it is connected and every link in $Y$ of a $0$-cube $y$ in $Y$ is a full subcomplex of the link of $y$ in $X$.
\end{definition}

The following is well known, for example see \cite[Lemma~2.12]{haglund08}.

\begin{cit}\label{cit:loc_cvx}
If $Y$ is a combinatorially convex subcomplex of a $\CAT(0)$ cube complex $X$, then $Y$ is itself $\CAT(0)$, hence contractible.
\end{cit}

In particular, to show that a connected subcomplex of a $\CAT(0)$ cube complex is contractible it suffices to check that links in the subcomplex are full, which is a very straightforward local condition.

\subsection{Ascending HNN-extensions}\label{sec:hnn}

The classical definition of an ascending HNN-extension is as follows.

\begin{definition}[External ascending HNN-extension]
Let $B$ be a group and $\phi\colon B\to B$ an injective endomorphism. The \emph{(external) ascending HNN-extension} of $B$ with respect to $\phi$ is the group
\[
B*_{\phi,t}\defeq \langle B,t\mid b^t=\phi(b) \text{ for all } b\in B \rangle\text{,}
\]
where $b^t$ denotes $t^{-1}bt$. If $\phi$ is not surjective we say the HNN-extension is \emph{strictly ascending}. We call $B$ the \emph{base group} and $t$ the \emph{stable element} of $B*_{\phi,t}$.
\end{definition}

Since we actually care about realizing existing groups as being isomorphic to ascending HNN-extensions, the notion of an ``internal'' ascending HNN-extension is most relevant for us.

\begin{cit}[Internal ascending HNN-extension]\cite[Lemma~3.1]{geoghegan01}\label{cit:int_hnn}
Let $G$ be a group, $B$ a subgroup and $z$ an element of $G$. Suppose $B$ and $z$ generate $G$ and that $B^z\subseteq B$. Assume that $z^n\in B$ if and only if $n=0$ (for instance this holds if $B^z\subsetneq B$). Then $G\cong B*_{\phi,t}$, where $\phi\colon B\to B$ is $b\mapsto b^z$.
\end{cit}

If $G$ is a group as in Citation~\ref{cit:int_hnn} we will often refer to it admitting an \emph{ascending HNN-decomposition}. In this case we write $G=B*_z$ to indicate that $G$ admits an ascending HNN-decomposition via the base group $B$ and the stable element $z$.

\subsection{BNSR-invariants}\label{sec:bnsr}

Let $G$ be a group. Consider the euclidean vector space $\Hom(G,\R)$, whose elements $\chi\colon G\to\R$ are called \emph{characters} of $G$. Declare that two characters are equivalent if they are positive scalar multiples of each other, and define $\Sigma(G)$ to be the set of equivalence classes of non-trivial characters. One can naturally view $\Sigma(G)$ as a sphere, of dimension one less than the dimension of $\Hom(G,\R)$, and we call it the \emph{character sphere} of $G$. The BNSR-invariants are certain subsets $\Sigma^1(G)\supseteq \Sigma^2(G)\supseteq\cdots$ of $\Sigma(G)$, defined as follows.

\begin{definition}[BNSR-invariants]\label{def:bnsr}
Suppose a group $G$ acts cellularly and cocompactly on an $(m-1)$-connected CW-complex $X$ such that the stabilizer of each $p$-cell is of type $\F_{m-p}$ (so in particular $G$ must be of type $\F_m$). For $0\ne \chi\in\Hom(G,\R)$ such that every stabilizer lies in $\ker(\chi)$, we can choose a map $h_\chi \colon X\to \R$ satisfying $h_\chi(g.x)=\chi(g)+h_\chi(x)$ for all $g\in G$ and $x\in X$. For $t\in\R$ let $X_{\chi\ge t}\defeq h_\chi^{-1}([t,\infty))$. The $m$th \emph{Bieri--Neumann--Strebel--Renz (BNSR) invariant} $\Sigma^m(G)$ is then
\[
\Sigma^m(G)\defeq \{[\chi]\in \Sigma(G)\mid (X_{\chi\ge t})_{t\in\R} \text{ is essentially $(m-1)$-connected}\}\text{.}
\]
\end{definition}

Here $(X_{\chi\ge t})_{t\in\R}$ is \emph{essentially $(m-1)$-connected} if for all $t\in\R$ there exists $-\infty<s\le t$ such that the inclusion of $X_{\chi\ge t}$ into $X_{\chi\ge s}$ induces the trivial map in $\pi_k$ for all $k\le m-1$. It turns out $\Sigma^m(G)$ is well defined up to the choice of $X$ and $h_\chi$ (see for example \cite[Definition~8.1]{bux04}). (Note that the hypothesis that every stabilizer lies in $\ker(\chi)$ is clearly necessary for $h_\chi$ to exist, but has occasionally been accidentally omitted in the literature, e.g., in \cite{bux04} and \cite{zaremsky17}.)

It is standard to denote $\Sigma(G)\setminus \Sigma^m(G)$ by $\Sigma^m(G)^c$, and to write $\Sigma^\infty(G)$ for the intersection of all the $\Sigma^m(G)$ over $m\in\N$.

One of the most important applications of the BNSR-invariants is the following.

\begin{cit}\cite[Theorem~1.1]{bieri10}\label{cit:fin_props}
Let $G$ be a group of type $\F_m$ and let $[G,G]\le H\le G$. Then $H$ is of type $\F_m$ if and only if for every $[\chi]\in \Sigma(G)$ such that $\chi(H)=0$, we have $[\chi]\in\Sigma^m(G)$.
\end{cit}

Definition~\ref{def:bnsr} is quite technical, but in our present work we will actually not need to worry about the details in the definition. Instead we will rely on various established results for computing the invariants, which we discuss now.

This first result will be our primary tool for proving that a character does lie in an invariant.

\begin{cit}\cite[Theorem~2.4]{meier01}\label{cit:stab_restrict}
Let $G$ be a group acting cocompactly on an $(m-1)$-connected complex $X$. Let $\chi\in\Hom(G,\R)$ be a character such that for any cell $\sigma$ in $X$ with dimension $\dim(\sigma)\le m$, the restriction $\chi|_{\Stab_G(\sigma)}$ is non-trivial. If for all $\sigma$ with $\dim(\sigma)<m$, we have that $\Stab_G(\sigma)$ is of type $\F_{m-\dim(\sigma)}$ and $[\chi|_{\Stab_G(\sigma)}]\in \Sigma^{m-\dim(\sigma)}(G_\sigma)$, then $[\chi]\in\Sigma^m(G)$.
\end{cit}

This next result (more precisely its contrapositive) will be our primary tool for proving that a character does not lie in an invariant.

\begin{cit}\cite[Corollary~2.8]{meinert97}\label{cit:hi_dim_quotients}
Let $\pi\colon G\to H$ be a split epimorphism of groups. Let $\chi$ be a character of $H$, so $\chi\circ\pi$ is a character of $G$. For any $m\ge 1$, if $[\chi\circ\pi]\in\Sigma^m(G)$ then $[\chi]\in\Sigma^m(H)$.
\end{cit}

As a remark, in the $m=1$ case the result holds even if $\pi$ does not split.

In the case that $G$ decomposes as an ascending HNN-extension we have the following two results, the second of which is a special case of Citation~\ref{cit:stab_restrict}.

\begin{cit}\cite[Theorem~2.1]{bieri10}\label{cit:hnn_kill}
Let $G$ decompose as an ascending HNN-extension $B*_z$. Let $\chi\in\Hom(G,\R)$ be the character defined by $\chi(B)=0$ and $\chi(z)=1$. If $B$ is of type $\F_m$ then $[\chi]\in \Sigma^m(G)$. If $B$ is finitely generated and $B*_z$ is strictly ascending then $[-\chi]\in \Sigma^1(G)^c$.
\end{cit}

\begin{cit}\cite[Theorem~2.3]{bieri10}\label{cit:hnn_restrict}
Let $G$ decompose as an ascending HNN-extension with base group $B$. Let $\chi\in\Hom(G,\R)$ such that $\chi|_B \ne 0$. If $B$ is of type $\F_\infty$ and $[\chi|_B]\in\Sigma^\infty(B)$ then $[\chi]\in \Sigma^\infty(G)$.
\end{cit}

Finally, the following is a useful tool for groups with well understood centers.

\begin{cit}\cite[Theorem~2.1]{meier01}\label{cit:center_survives}
Let $G$ be a group of type $\F_m$ and let $\chi\in\Hom(G,\R)$. If $\chi(Z(G))\ne 0$ then $[\chi]\in\Sigma^m(G)$.
\end{cit}

\subsection{Morse theory}\label{sec:morse}

Bestvina--Brady discrete Morse theory, developed in \cite{bestvina97}, has proven to be an indispensable tool in the study of finiteness properties of groups. Its original application done by Bestvina and Brady in \cite{bestvina97} can be viewed as a determination of precisely in which BNSR-invariants a certain character of a right-angled Artin group lies, and afterward Meier--Meinert--VanWyk \cite{meier98} and, independently, Bux--Gonzalez \cite{bux99}, gave a complete picture of all the BNSR-invariants of right-angled Artin groups. Many subsequent applications of Bestvina--Brady Morse theory, especially in the study of Thompson-like groups, use a combination of Morse theory with Brown's Criterion, first formulated in \cite{brown87}. In what follows we will need to use both ``direct'' discrete Morse theory and also the combination with Brown's Criterion, which we will state after providing some setup.

Let $X$ be an affine cell complex, as in \cite{bestvina97} (for example a cube complex). Let $h\colon X\to \R$ be a map whose restriction $h|_C \to \R$ to any cell $C$ in $X$ is affine. We call $h$ a \emph{Morse function} if it is non-constant on positive dimensional cells and $h(X)\subseteq \R$ is discrete. (Both of these conditions often need to be relaxed when dealing with BNSR-invariants, see for example \cite[Definition~1.5]{witzel18}, but it turns out we do not need to worry about this here.) The conditions ensure that for every cell $C$, the function $h$ achieves its maximum and minimum values on $C$ at unique $0$-faces. If $h$ achieves its minimum (maximum) value on $C$ at the $0$-face $x$ then we say $C$ is \emph{ascending (descending)} with respect to $x$. The \emph{ascending (descending) link} of a $0$-cell $x$, denoted $\alk x$ ($\dlk x$) is the subcomplex of the link $\lk x$ consisting of directions into those cells that are ascending (descending) with respect to $x$. If we need to consider links taken only in some subcomplex $Y$ of $X$, we will write things like $\lk_Y$, $\alk_Y$, $\dlk_Y$.

For $p\le q$ let $X^{p\le h\le q}$ denote the full subcomplex of $X$ spanned by those $0$-cells $x$ with $p\le h(x)\le q$. We allow $p=-\infty$ and write $X^{h\le q}$ in this case, and similarly allow $q=\infty$ and write $X^{p\le h}$. If we write $<$ instead of $\le$ in the notation then this is reflected in the conditions. We now state the Morse Lemma we will use; it follows from the proof of \cite[Corollary~2.6]{bestvina97} and is a special case of the more general Morse Lemma used in \cite[Lemma~1.7]{witzel18} and \cite[Lemma~1.4]{zaremsky17}.

\begin{lemma}[Morse Lemma]\label{lem:morse}
Let $h\colon X\to \R$ be a discrete Morse function on an affine cell complex, as above. Let $p,q,r\in \R\cup\{\pm\infty\}$ with $p\le q\le r$. If for every $0$-cell $x$ in $X^{q<h\le r}$ the descending link $\dlk_{X^{p\le h}}x$ of $x$ in $X^{p\le h}$ is $(n-1)$-connected, then the inclusion $X^{p\le h\le q}\to X^{p\le h\le r}$ induces isomorphisms in the homotopy groups $\pi_k$ for $k\le n-1$ and an epimorphism in $\pi_n$. If for every $0$-cell $x$ in $X^{p\le h<q}$ the ascending link $\alk_{X^{h\le r}}x$ of $x$ in $X^{h\le r}$ is $(n-1)$-connected, then the inclusion $X^{q\le h\le r}\to X^{p\le h\le r}$ induces isomorphisms in the homotopy groups $\pi_k$ for $k\le n-1$ and an epimorphism in $\pi_n$.
\end{lemma}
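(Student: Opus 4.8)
The plan is to reprove the lemma directly, in the spirit of the proof of \cite[Corollary~2.6]{bestvina97}, rather than extract it from the more general Morse Lemmas of \cite{witzel15,zaremsky17}. I will describe only the first assertion; the second is entirely symmetric, obtained by filtering $X^{p\le h\le r}$ downward from $X^{q\le h\le r}$ and coning off \emph{ascending} links instead. The strategy is to realize $X^{p\le h\le r}$ as obtained from $X^{p\le h\le q}$ by adjoining the $0$-cells of $X^{q<h\le r}$ one level at a time, in order of increasing $h$-value, and to show that each step alters the homotopy type only by coning off descending links. Concretely, since $h(X)$ is discrete the values of $h$ on the $0$-cells of $X^{q<h\le r}$ form a list $q<t_1<t_2<\cdots$ that is finite when $r<\infty$ and otherwise increases to $+\infty$; I set $Y_0\defeq X^{p\le h\le q}$ and $Y_i\defeq X^{p\le h\le t_i}$, so that $X^{p\le h\le r}=\bigcup_i Y_i$.

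The combinatorial heart is the following bookkeeping. Because $h$ is affine and non-constant on every positive-dimensional cube, it attains its maximum on each cube at a unique $0$-face, and no two $0$-cells at a common level span an edge. Hence every cube of $Y_i$ not already contained in $Y_{i-1}$ has its (unique) top $0$-face at level $t_i$; for a fixed new $0$-cell $x$ at level $t_i$, the cubes with top $0$-face $x$ are exactly the cubes that are descending with respect to $x$, their union $\st^\downarrow(x)$ is contractible (it deformation retracts to $x$), and $\st^\downarrow(x)\cap Y_{i-1}$ is a subcomplex homotopy equivalent to $\dlk x$. Moreover this descending link, computed in $Y_i$, coincides with $\dlk_{X^{p\le h}}x$, since every cube descending with respect to $x$ automatically has all of its $0$-faces at levels in $[p,t_i]$. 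I conclude that, up to homotopy, $Y_i$ is obtained from $Y_{i-1}$ by coning off --- simultaneously over the pairwise non-adjacent new level-$t_i$ vertices $x$ --- the subcomplexes $\dlk_{X^{p\le h}}x$, each of which is $(n-1)$-connected by hypothesis.

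It then remains to record the homotopical effect of coning off a highly connected subcomplex: if $Z$ is an $(n-1)$-connected subcomplex of a CW complex $W$, then $W\hookrightarrow W\cup_Z\operatorname{Cone}(Z)$ induces an isomorphism on $\pi_k$ for $k\le n-1$ and an epimorphism on $\pi_n$. For $n\ge 1$ this is because $Z$ has the homotopy type of a CW complex with a single $0$-cell and cells only in dimensions $\ge n$, so that $W\cup_Z\operatorname{Cone}(Z)$ has the homotopy type of $W$ with cells of dimension $\ge n+1$ attached, whence cellular approximation applies (this also covers coning off possibly infinitely many disjointly-glued pieces at once, as the relevant spheres and disks are compact); the case $n=0$ is elementary. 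Applying this at every level, the inclusion $Y_0\hookrightarrow Y_i$ is an isomorphism on $\pi_k$ for $k\le n-1$ and an epimorphism on $\pi_n$ for every $i$, and passing to the colimit over $i$ --- legitimate since $\pi_k$ commutes with direct limits of CW complexes, and a colimit of maps each an isomorphism on $\pi_k$ for $k\le n-1$ and an epimorphism on $\pi_n$ retains that property --- yields the same conclusion for $Y_0\hookrightarrow X^{p\le h\le r}$, which is the assertion.

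I expect the main obstacle to be the ``local'' claim used in the second paragraph: that adjoining the descending cubes of a new top-level $0$-cell $x$ amounts, up to homotopy, to coning off $\dlk x$ --- in particular that $\st^\downarrow(x)\cap Y_{i-1}$ deformation retracts onto a copy of $\dlk x$, verified carefully for general affine cells. This is precisely the technical core of Bestvina--Brady Morse theory, and is what I would import wholesale from \cite{bestvina97}. By contrast, the discreteness bookkeeping, the behaviour under colimits, and the homotopical effect of coning off a highly connected subcomplex are all routine.
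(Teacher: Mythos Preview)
Your proposal is correct and follows precisely the approach the paper indicates: the paper does not give an independent proof but simply states that the lemma ``follows from the proof of \cite[Corollary~2.6]{bestvina97}'' and is a special case of the more general Morse Lemmas in \cite{witzel15,zaremsky17}, and your argument is exactly a fleshed-out version of the Bestvina--Brady argument (filter by level, cone off descending links, track connectivity). The identification you flag as the main obstacle---that $(\st^\downarrow(x),\st^\downarrow(x)\cap Y_{i-1})$ is homotopy equivalent as a pair to $(\operatorname{Cone}(\dlk x),\dlk x)$---is indeed the technical core of \cite{bestvina97}, and your decision to import it wholesale matches the paper's own deferral.
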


The combination of Morse theory plus Brown's Criterion we will use is as follows, and is immediate from Lemma~\ref{lem:morse} and \cite[Corollary~3.3]{brown87}.

\begin{lemma}\label{lem:brown}
Let $G$ be a group acting cellularly on a contractible affine cell complex $X$. Let $h\colon X\to \R$ be a $G$-invariant discrete Morse function such that the sublevel sets $X^{h\le q}$ are $G$-cocompact. Assume every cell stabilizer in $G$ is of type $\F_\infty$. Suppose that for all $n$ there exists $q$ such that for all $0$-cells $x$ with $h(x)>q$ the descending link $\dlk x$ is $(n-1)$-connected. Then $G$ is of type $\F_\infty$.
\end{lemma}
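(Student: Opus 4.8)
The plan is to deduce the statement by feeding the first half of the Morse Lemma (Lemma~\ref{lem:morse}) into Brown's Criterion \cite[Corollary~3.3]{brown87}. The key point is that the hypothesis on descending links is exactly what forces the sublevel sets $X^{h\le q}$ to become highly connected as $q\to\infty$, and these sublevel sets constitute a $G$-cocompact filtration of the contractible complex $X$.

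First I would fix $n\in\N$ and let $q_n$ be the value provided by the hypothesis, so that $\dlk x$ is $(n-1)$-connected for every $0$-cell $x$ with $h(x)>q_n$. Since the descending link $\dlk x$ computed in $X$ is the same as $\dlk_{X^{-\infty\le h}}x$, the first part of Lemma~\ref{lem:morse} applies with its parameter ``$p$'' equal to $-\infty$, its parameter ``$q$'' equal to any $q'\ge q_n$, and its parameter ``$r$'' equal to $\infty$: every $0$-cell of $X^{q'<h\le\infty}$ has $h$-value exceeding $q_n$, so its descending link in $X^{-\infty\le h}=X$ is $(n-1)$-connected, and we conclude that the inclusion $X^{h\le q'}\hookrightarrow X$ induces isomorphisms on $\pi_k$ for $k\le n-1$ and an epimorphism on $\pi_n$. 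As $X$ is contractible this gives $\pi_k(X^{h\le q'})=0$ for all $k\le n-1$, i.e.\ $X^{h\le q'}$ is $(n-1)$-connected for every $q'\ge q_n$.

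Next I would assemble the filtration. Since $h(X)$ is discrete we may pick a cofinal increasing sequence $q^{(1)}<q^{(2)}<\cdots$ in $h(X)$ and set $X_i\defeq X^{h\le q^{(i)}}$. Each $X_i$ is a $G$-cocompact subcomplex by hypothesis, and $\bigcup_i X_i=X$ is contractible. By the previous paragraph, for every $n$ the subcomplex $X_i$ is $(n-1)$-connected once $q^{(i)}\ge q_n$, so in particular the filtration $(X_i)_i$ is essentially $(n-1)$-connected for every $n$. Since moreover every cell stabilizer is of type $\F_\infty$ (hence of type $\F_n$ for each $n$), Brown's Criterion \cite[Corollary~3.3]{brown87} yields that $G$ is of type $\F_n$; as $n$ was arbitrary, $G$ is of type $\F_\infty$.

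The argument is purely formal once these two ingredients are combined, so I do not expect a genuine obstacle. The only points deserving care are (a) matching the descending-link hypothesis to the $p=-\infty$ instance of Lemma~\ref{lem:morse}, so that the symbol $\dlk x$ in the statement is literally the one appearing in the Morse Lemma, and (b) checking that ``$X_i$ genuinely $(n-1)$-connected for all large $i$'' is enough to meet the essential-connectivity hypothesis of Brown's Criterion at every level $n$, which it clearly is.
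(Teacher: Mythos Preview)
Your proposal is correct and follows exactly the approach indicated in the paper, which simply states that the lemma is immediate from Lemma~\ref{lem:morse} and \cite[Corollary~3.3]{brown87}; you have faithfully spelled out how these two ingredients combine.
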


\section{The groups}\label{sec:gps}

In this section we discuss our groups of interest. First we recall some background on braid groups and pure braid groups. Then we discuss the braided Thompson group $\Vbr$ and pure braided Thompson group $\Fbr$, and a crucial family of subgroups $\Fbr(\ty{w})$ for $\ty{w}\in\{\ty{0},\ty{1}\}^*$.

\subsection{Braid groups}\label{sec:braid}

The \emph{$n$-strand braid group} $B_n$ is defined by its standard \emph{Artin presentation}
\begin{align*}
B_n \defeq \left\langle s_1,\dots,s_{n-1}\left|
\begin{array}{cc}
s_i s_j = s_j s_i &\text{ whenever } |i-j|>1 \text{ and }\\
s_i s_{i+1} s_i = s_{i+1}s_i s_{i+1} &\text{ for all } 1\le i\le n-2
\end{array}
\right.\right\rangle\text{.}
\end{align*}
An element of $B_n$ can be viewed as a picture of $n$ strands braiding around each other, with $s_i$ represented by a braid where the $i$th strand goes up and crosses under the $(i+1)$st. Here we always number strands at the bottom, so for example $s_1\in B_2$ is given by the picture in Figure~\ref{fig:cross} (as opposed to its mirror image).

\begin{figure}[htb]
 \begin{tikzpicture}[line width=0.8pt]\centering
  \draw (1,0) to[out=-90, in=90] (0,-2);
  \draw[white, line width=4pt] (0,0) to[out=-90, in=90] (1,-2);
  \draw (0,0) to[out=-90, in=90] (1,-2);
 \end{tikzpicture}
 \caption{The element $s_1$ in $B_2$.}\label{fig:cross}
\end{figure}
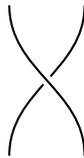

Denoting the symmetric group $\Symm(\{1,\dots,n\})$ by $S_n$, there is a natural epimorphism $\rho\colon B_n \to S_n$ given by introducing the additional relations $s_i^2=1$ for all $1\le i\le n-1$. The resulting presentation for $S_n$ is its standard \emph{Coxeter presentation}. Visually, given a braid $b$, if the $i$th strand counting from the bottom is the $j$th strand counting from the top then $\rho(b)(i)=j$. The kernel of $\rho$ is the \emph{$n$-strand pure braid group} $PB_n$. Note that when a braid is pure, the numbering of strands at the top and at the bottom is the same, so in particular for any $1\le i\le n$, deleting the $i$th strand yields a homomorphism $PB_n\to PB_{n-1}$, which is easily seen to be surjective. Removing collections of strands thus gives us epimorphisms $PB_n\to PB_m$ for $m<n$. Note that $PB_2$ is generated by $s_1^2$, so we can fix an isomorphism $PB_2\to \Z$ via $s_1^2\mapsto 1$ and get a family of maps from $PB_n$ to $\Z$. Namely, for each $1\le i<j\le n$ we have an epimorphism $\omega_{i,j}\colon PB_n \to \Z$ given by deleting all but the $i$th and $j$th strands.

\begin{definition}[Winding number]
Let $1\le i<j\le n$. The \emph{winding number} of the $i$th and $j$th strands of a pure braid $p\in PB_n$ is the number $\omega_{i,j}(p)\in\Z$.
\end{definition}

It turns out that the $\omega_{i,j}$ form a basis for $\Hom(PB_n,\R)$, so $\Hom(PB_n,\R)\cong \R^{\binom{n}{2}}$ and $\Sigma(PB_n)\cong S^{\binom{n}{2}-1}$. The BNS-invariant $\Sigma^1(PB_n)$ was computed for all $n$ by Koban--McCammond--Meier in \cite{koban15}, but very little is known about the higher BNSR-invariants (they are all defined, since (pure) braid groups are of type $\F_\infty$). One thing that is known is that the inclusions $\Sigma^m(PB_n)\subseteq \Sigma^{m-1}(PB_n)$ are proper if and only if $m\le n-2$ \cite{zaremsky17sepPB}. Fortunately, even though computing all the $\Sigma^m(\Fbr)$ will require us to know something about the $\Sigma^m(PB_n)$, it will turn out to be enough to just understand how characters of $PB_n$ interact with the center $Z(PB_n)$.

The center $Z(PB_n)$ is isomorphic to $\Z$, generated by an element $\Delta_n^2$ that is the square of
\[
\Delta_n \defeq s_1 \cdots s_{n-1} s_1 \cdots s_{n-2} \cdots s_1 s_2 s_1 \text{.}
\]
Visually, in $\Delta_n^2$ all the strands rotate rigidly around by 360 degrees. In particular $\omega_{i,j}(\Delta_n^2)=1$ for all $i,j$, and since the $\omega_{i,j}$ form a basis for $\Hom(PB_n,\R)$ this makes it easy to tell whether or not a character contains $Z(PB_n)$ in its kernel. This will be very useful later, when coupled with Citation~\ref{cit:center_survives}.

\subsection{Braided Thompson groups}\label{sec:Fbr}

The classical Thompson groups $F\subset T\subset V$, introduced in unpublished notes of Thompson in the 1960's, serve as important examples and counterexamples in group theory. In particular $T$ and $V$ were the first known examples of finitely presented infinite simple groups. The group $V$ can be viewed as a relative to Coxeter groups, for example in some sense it is built out of symmetric groups. It has a natural ``Artinification,'' called ``braided $V$,'' with a similar relationship to $V$ as the braid groups have to the symmetric groups. This group, often denoted $BV$ or $\Vbr$, was introduced independently by Brin and Dehornoy \cite{brin07,dehornoy06}, and has proven to have a variety of interesting and surprising properties. Like $V$, it is finitely presented \cite{brin06} and of type $\F_\infty$ \cite{bux16}. Further results include an inspection of metric properties of $\Vbr$ \cite{burillo09} and normal subgroups \cite{zaremsky18}. In particular, while $\Vbr$ is not simple like $V$ is, since it surjects onto $V$ much like $B_n$ surjects onto $S_n$, it is true that every proper normal subgroup of $\Vbr$ lies in the kernel of this quotient to $V$ \cite{zaremsky18}. A ``pure'' braided Thompson group, usually denoted $BF$ or $\Fbr$\footnote{The notation $BG$ often stands for a classifying space of a group $G$, so even though there is not actually any risk of confusion we will stick to writing $\Vbr$ and $\Fbr$ instead of $BV$ and $BF$.}, was introduced by Brady--Burillo--Cleary--Stein \cite{brady08}, and shown to be finitely presented. It, like $\Vbr$, is of type $\F_\infty$ \cite{bux16}.

A convenient model for elements of $\Vbr$ and $\Fbr$ is given by so called strand diagrams. An element of $\Vbr$ is represented by a triple $(T_-,b,T_+)$ for $T_\pm$ a pair of trees (by which we will always mean finite rooted binary trees) with the same number of leaves, say $n$, and $b$ an element of the $n$-strand braid group $B_n$. These data form a \emph{strand diagram} by drawing $T_-$ with its root at the top and its leaves at the bottom, drawing $T_+$ below $T_-$ with its root at the bottom and its leaves at the top, and connecting the leaves of $T_-$ to those of $T_+$ with the braid $b$. See Figure~\ref{fig:BV_element} for an example.

\begin{figure}[htb]
 \begin{tikzpicture}[line width=0.8pt]
  \draw
   (0,-2) -- (2,0) -- (4,-2)   (1.5,-1.5) -- (1,-2)   (1,-1) -- (2,-2)   (3.5,-1.5) -- (3,-2)
   (0,-2) to [out=-90, in=90] (1,-4)   (2,-2) to [out=-90, in=90] (3,-4);
  \draw[white, line width=4pt]
   (1,-2) to [out=-90, in=90] (0,-4)   (4,-2) to [out=-90, in=90] (2,-4);
  \draw
   (1,-2) to [out=-90, in=90] (0,-4)   (4,-2) to [out=-90, in=90] (2,-4);
  \draw[white, line width=4pt]
  (4,-2) to [out=-90, in=90] (2,-4);
  \draw
  (4,-2) to [out=-90, in=90] (2,-4);
  \draw[white, line width=4pt]
  (3,-2) to [out=-90, in=90] (4,-4);
  \draw
  (3,-2) to [out=-90, in=90] (4,-4);
  \draw
   (0,-4) -- (2,-6) -- (4,-4)   (2.5,-5.5) -- (2,-5) -- (3,-4)   (1,-4) -- (2,-5)   (2,-4) -- (2.5,-4.5);
 \end{tikzpicture}
 \caption{An element of $\Vbr$.}\label{fig:BV_element}
\end{figure}

We referred above to triples $(T_-,b,T_+)$ as ``representing'' elements of $\Vbr$, and by this we mean that different such triples might yield the same element of $\Vbr$; more formally, elements of $\Vbr$ are equivalence classes of triples. Two triples are \emph{equivalent} if they can be obtained from each other by a finite sequence of expansions and reductions.

\begin{definition}[Expansion/reduction]
Let $(T_-,b,T_+)$ be a triple as above. An \emph{expansion} of $(T_-,b,T_+)$ is a triple of the form $(T_-',b',T_+')$, where $T_+'$ is obtained from $T_+$ by adding a caret to some leaf, say the $k$th, then bifurcating the $k$th strand of $b$ (counting at the bottom) into two parallel strands to get $b'$, and finally adding a caret to the $\rho(k)$th leaf of $T_-$ to get $T_-'$. We also call $(T_-,b,T_+)$ a \emph{reduction} of $(T_-',b',T_+')$.
\end{definition}

We write $[T_-,b,T_+]$ for the equivalence class of $(T_-,b,T_+)$ under this equivalence relation. Two elements $[T_-,b,T_+]$ and $[U_-,c,U_+]$ of $\Vbr$ can be multiplied in the following way. First perform expansions to get $[T_-,b,T_+]=[T_-',b',T_+']$ and $[U_-,c,U_+]=[U_-',c',U_+']$ such that $T_+'=U_-'$, and then declare
\[
[T_-,b,T_+][U_-,c,U_+]\defeq [T_-',b'c',U_+']\text{.}
\]
It is straightforward to check that this is a group operation on $\Vbr$. The identity is $[\cdot,1,\cdot]$ for $\cdot$ the trivial tree, and inverses are given by $[T_-,b,T_+]^{-1}=[T_+,b^{-1},T_-]$.

The elements of $\Vbr$ of the form $[T_-,p,T_+]$ for $p$ a pure braid form a subgroup $\Fbr\le \Vbr$. Since the property of a braid being pure is preserved under bifurcating a strand into a pair of parallel strands, $\Fbr$ is indeed closed under multiplication. Another important subgroup is Thompson's group $F$, which consists of elements of the form $[T_-,1,T_+]$; we will sometimes write $[T_-,T_+]$ for this element.

\subsection{The groups $\Fbr(\ty{w})$}\label{sec:deferred}

In our computation of the $\Sigma^m(\Fbr)$ it will be important to understand a certain family of subgroups of $\Fbr$ denoted $\Fbr(\ty{w})$ for $\ty{w}\in\{\ty{0},\ty{1}\}^*$. To define them, we need some setup. Recall that $\{\ty{0},\ty{1}\}^*$ denotes the set of finite words in the alphabet $\{\ty{0},\ty{1}\}$, so given a (finite rooted binary) tree $T$ we can identify each vertex of $T$ with an element of $\{\ty{0},\ty{1}\}^*$, namely the root is identified with the empty word $\varnothing$ and the left (respectively right) child of the vertex identified with $\ty{w}$ is identified with $\ty{w0}$ (respectively $\ty{w1}$). Call $\ty{w}$ the \emph{address} of the leaf identified with $\ty{w}$. Two addresses are \emph{independent} if neither is a prefix of the other.

\begin{definition}[Vine]
Let $\ty{w}\in\{\ty{0},\ty{1}\}^*$, say with length $n$. The \emph{vine to $\ty{w}$} is the unique tree with $n+1$ leaves that has a leaf with address $\ty{w}$. We denote the vine to $\ty{w}$ by $V(\ty{w})$.
\end{definition}

\begin{definition}[$\ty{w}$-deferred]
Let $\ty{w}\in\{\ty{0},\ty{1}\}^*$. A tree $T$ is called \emph{$\ty{w}$-deferred} if every address of a leaf of $T$ either has $\ty{w}$ as a prefix or else is the address of a leaf of $V(\ty{w})$.
\end{definition}

Every tree is $\varnothing$-deferred. Note that vines are not uniquely determined by addresses, since $V(\ty{w0})=V(\ty{w1})$ for all $\ty{w}$. However, if a tree is both $\ty{w0}$-deferred and $\ty{w1}$-deferred then the only option is that it is the vine $V(\ty{w0})=V(\ty{w1})$.

\begin{definition}[$\Fbr(\ty{w})$]
Let $\ty{w}\in\{\ty{0},\ty{1}\}^*$. The subgroup $\Fbr(\ty{w})\le \Fbr$ is defined to be
\[
\Fbr(\ty{w})\defeq \{[T_-,p,T_+]\mid T_- \text{ and } T_+ \text{ are } \ty{w}\text{-deferred}\}\text{.}
\]
\end{definition}

Note that given elements $[T_-,p,T_+],[U_-,q,U_+]\in \Fbr(\ty{w})$ it is possible to take expansions $[T_-',p',T_+'],[U_-',q',U_+']$ such that $T_+'=U_-'$ and all the trees $T_\pm',U_\pm'$ are still $\ty{w}$-deferred, so $\Fbr(\ty{w})$ really is closed under multiplication. Note however that not every representative $(T_-,p,T_+)$ of an element $[T_-,p,T_+]$ of $\Fbr(\ty{w})$ must feature $\ty{w}$-deferred trees; it is possible that reductions must be performed to get $\ty{w}$-deferred trees. Note that $\Fbr(\varnothing)=\Fbr$.

Our next goal is to see that the $\Fbr(\ty{w})$ admit nice HNN-decompositions in terms of each other. For $\ty{w}\in\{\ty{0},\ty{1}\}^*$ let $x_{\ty{w}}$ denote the element $[V(\ty{w01}),V(\ty{w10})]$ of $F$. In particular for all $n\in\N\cup\{0\}$, the element $x_{\ty{1}^n}$ equals the generator $x_n$ of $F$ from the standard infinite presentation $F=\langle x_0,x_1,\dots\mid x_j x_i = x_i x_{j+1}$ for all $i<j\}$.

\begin{lemma}\label{lem:Fbrn_gens}
Let $\ty{w}\in\{\ty{0},\ty{1}\}^*$. The subgroup $\Fbr(\ty{w})$ of $\Fbr$ is generated by $x_{\ty{w}}$ together with either of $\Fbr(\ty{w0})$ or $\Fbr(\ty{w1})$.
\end{lemma}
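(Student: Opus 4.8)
The inclusion $\langle x_{\ty{w}},\Fbr(\ty{w1})\rangle\subseteq\Fbr(\ty{w})$ is clear: the trees $V(\ty{w01})$ and $V(\ty{w10})$ are $\ty{w}$-deferred, so $x_{\ty{w}}\in\Fbr(\ty{w})$, and every $\ty{w1}$-deferred tree is $\ty{w}$-deferred; the same reasoning gives $\langle x_{\ty{w}},\Fbr(\ty{w0})\rangle\subseteq\Fbr(\ty{w})$. For the reverse inclusions I will argue the $\ty{w1}$ case in detail; the $\ty{w0}$ case is the mirror image, obtained by interchanging the addresses $\ty{w0}$ and $\ty{w1}$ and replacing $x_{\ty{w}}$ by $x_{\ty{w}}^{-1}=[V(\ty{w10}),V(\ty{w01})]$ throughout.

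Here is the idea. Each $\ty{w}$-deferred tree $T$ is the vine $V(\ty{w})$ with some finite rooted binary tree grafted onto its leaf of address $\ty{w}$, and I assign to $T$ the \emph{complexity} $c(T)$ equal to the number of leaves of $T$ whose address has $\ty{w0}$ as a prefix. Reading off the definitions: $c(T)=0$ exactly when $T=V(\ty{w})$; $c(T)=1$ exactly when $T$ is $\ty{w1}$-deferred and not equal to $V(\ty{w})$; and $c(T)\ge2$ exactly when $T$ is an expansion of $V(\ty{w01})$. In particular $c(T)\le1$ precisely when $T$ is $V(\ty{w})$ or is $\ty{w1}$-deferred. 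Fix now $g=[T_-,p,T_+]\in\Fbr(\ty{w})$, written with $\ty{w}$-deferred trees $T_\pm$.

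The key computation is the following. If $c(T_+)\ge2$, write $A$, $B$, $C$ for the subtrees of $T_+$ hanging from the addresses $\ty{w00}$, $\ty{w01}$, $\ty{w1}$. Since $T_+$ is then an expansion of $V(\ty{w01})$, which is the top tree of $x_{\ty{w}}$, the product $g\cdot x_{\ty{w}}$ is formed without expanding $g$ at all, and one gets
\[
g\cdot x_{\ty{w}}=[T_-,\,p,\,\tilde T_+]\text{,}
\]
where $\tilde T_+$ carries $A$, $B$, $C$ below the addresses $\ty{w0}$, $\ty{w10}$, $\ty{w11}$; thus $c(\tilde T_+)$ is the number of leaves of $A$, strictly less than $c(T_+)$, which counts the leaves of $A$ and of $B$ together. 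Symmetrically, if $c(T_-)\ge2$ then $x_{\ty{w}}^{-1}\cdot g$ is formed without expanding $g$ and strictly decreases $c(T_-)$ while leaving $T_+$ and $p$ alone. Since $c(T_\pm)$ are non-negative integers, finitely many such moves replace $g$ by $x_{\ty{w}}^{-a}\,g\,x_{\ty{w}}^{b}$ with $c(T_-)\le1$ and $c(T_+)\le1$. For such a $g$ one checks: if some $T_\pm$ equals $V(\ty{w})$ then so does the other (it is the unique $\ty{w}$-deferred tree on that many leaves), and a single expansion at the leaf $\ty{w}$ rewrites $g$ as $[V(\ty{w1}),p',V(\ty{w1})]\in\Fbr(\ty{w1})$; otherwise $T_-$ and $T_+$ are both $\ty{w1}$-deferred and $g\in\Fbr(\ty{w1})$ by definition. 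Either way $g\in\Fbr(\ty{w1})$, so the original element lies in $\langle x_{\ty{w}},\Fbr(\ty{w1})\rangle$, completing the proof.

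The step I expect to require the most care is the displayed identity for $g\cdot x_{\ty{w}}$: it amounts to bookkeeping the interaction of expansions and reductions with multiplication of strand diagrams, in particular keeping track of which leaf of $V(\ty{w01})$ is joined to which leaf of $V(\ty{w10})$ by the trivial braid of $x_{\ty{w}}$. Once this is laid out it is routine, and the braid $p$ never intervenes — bifurcating the trivial braid of $x_{\ty{w}}$ produces the trivial braid again — so verbatim the same argument proves the corresponding statement with Thompson's group $F$ in place of $\Fbr$. It is worth isolating one small point used in the endgame: although $V(\ty{w})$ is itself not $\ty{w1}$-deferred, any element of $\Fbr(\ty{w})$ whose two trees both equal $V(\ty{w})$ still lies in $\Fbr(\ty{w1})$, precisely because of the expansion at $\ty{w}$ just mentioned.
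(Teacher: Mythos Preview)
Your proof is correct and follows essentially the same approach as the paper: both argue the $\ty{w1}$ case by multiplying on the left and right by powers of $x_{\ty{w}}$ to strictly decrease a complexity measure on the trees, then handle the base case $V(\ty{w})$ by a single expansion. The only cosmetic difference is the choice of measure---you count leaves with prefix $\ty{w0}$, while the paper uses the ``left depth'' (the $m$ for which $\ty{w0}^m$ is a leaf address)---but the reduction step and endgame are the same.
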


\begin{proof}
Let $[T_-,p,T_+]$ be an arbitrary element of $\Fbr(\ty{w})$, with $T_-$ and $T_+$ both $\ty{w}$-deferred. Note that any $\ty{w}$-deferred tree $T$ has a unique leaf with address of the form $\ty{w}\ty{0}^m$ and a unique leaf with address of the form $\ty{w}\ty{1}^n$. Call $m$ the \emph{left depth} of $T$ and $n$ the \emph{right depth}.

Let us now focus on the version of the statement using $\Fbr(\ty{w0})$. If $T_-$ and $T_+$ both have left depth $0$ then they are both the vine to $\ty{w}$. In this case after performing an expansion they both equal the vine to $\ty{w0}$, which is $\ty{w0}$-deferred, so $[T_-,p,T_+]\in \Fbr(\ty{w0})$. Now assume $T_-$ and $T_+$ both have positive left depth (one cannot have left depth $0$ without the other one also having left depth $0$, since they have the same number of leaves). If $T_-$ has left depth $1$ then $T_-$ is already $\ty{w0}$-deferred. Now assume $T_-$ has left depth $m\ge 2$. Then $T_-$ can be obtained from $V(\ty{w01})$ by adding carets, so computing the product $x_{\ty{w}}^{-1}[T_-,p,T_+]$ requires no expansion of $(T_-,p,T_+)$, only of $x_{\ty{w}}^{-1}=[V(\ty{w10}),V(\ty{w01})]$, and we compute that it equals $[T_-',p,T_+]$ for some $T_-'$ that has left depth $m-1$. Continuing in this way, $x_{\ty{w}}^{-(m-1)}[T_-,p,T_+]$ equals $[T_-^{(m-1)},p,T_+]$ for some $T_-^{(m-1)}$ with left depth $1$, so without loss of generality $T_-$ has left depth $1$. By a similar argument if $T_+$ has left depth $m\ge 2$ then after right multiplication by $x_{\ty{w}}^{m-1}$ we can assume $T_+$ has left depth $1$. At this point both $T_-$ and $T_+$ have left depth $1$, so are $\ty{w1}$-deferred, and $[T_-,p,T_+]\in \Fbr(\ty{w0})$.

For the version of the statement using $\Fbr(\ty{w1})$ we do an analogous trick to say that, up to multiplying by powers of $x_{\ty{w}}$ we can assume $T_-$ and $T_+$ have right depth $1$, and then $[T_-,p,T_+]\in \Fbr(\ty{w1})$.
\end{proof}

\begin{lemma}\label{lem:conj}
For any $\ty{w}\in\{\ty{0},\ty{1}\}^*$ we have $\Fbr(\ty{w1})^{x_{\ty{w}}}\le \Fbr(\ty{w11})$ and $\Fbr(\ty{w0})^{x_{\ty{w}}^{-1}}\le \Fbr(\ty{w00})$.
\end{lemma}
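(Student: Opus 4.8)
The plan is to prove both containments by a direct computation with strand diagrams. Since the second statement is the mirror image of the first --- swap the roles of $\ty{0}$ and $\ty{1}$, replace $x_\ty{w}$ by $x_\ty{w}^{-1}$, and use $V(\ty{w01})=V(\ty{w00})$ in place of $V(\ty{w10})=V(\ty{w11})$ --- I would carry out the argument for $\Fbr(\ty{w1})^{x_\ty{w}}\le\Fbr(\ty{w11})$ in full and merely indicate the symmetric modifications for the other.

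First I would record the relevant shapes. Any $\ty{w1}$-deferred tree is obtained from the vine $V(\ty{w1})$ by grafting some tree below its leaf of address $\ty{w1}$, so a generic $g\in\Fbr(\ty{w1})$ can be written $g=[T_-,p,T_+]$ with $T_\pm$ equal to $V(\ty{w1})$ carrying a subtree $S_\pm$ below the leaf $\ty{w1}$, and with $p$ pure. Also $x_\ty{w}=[V(\ty{w01}),V(\ty{w10})]$ and $x_\ty{w}^{-1}=[V(\ty{w10}),V(\ty{w01})]$ carry the trivial braid. The one bookkeeping observation needed is that, under this trivial braid, the leaf $\ty{w1}$ of $V(\ty{w01})$ is joined to the leaf $\ty{w11}$ of $V(\ty{w10})$: both vines agree with $V(\ty{w})$ above the leaf $\ty{w}$, and below $\ty{w}$ their three leaves $\ty{w00},\ty{w01},\ty{w1}$ and $\ty{w0},\ty{w10},\ty{w11}$ are matched in left-to-right order.

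With this in hand I would compute $x_\ty{w}^{-1}gx_\ty{w}$ in two steps. To form $x_\ty{w}^{-1}g$ the only expansions needed are: graft a caret below the leaf $\ty{w0}$ of $T_-$ (bifurcating the corresponding strand of $p$, which is legitimate since $p$ is pure and keeps it pure) and graft $S_-$ below the leaf $\ty{w11}$ of the $V(\ty{w10})$ appearing in $x_\ty{w}^{-1}$; this gives $x_\ty{w}^{-1}g=[B_-,p',T_+']$, where $B_-$ is $V(\ty{w10})$ with $S_-$ grafted below $\ty{w11}$, $p'$ is pure, and $T_+'$ is $T_+$ with a caret below $\ty{w0}$. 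Now $T_+'$ already refines the coarsening $V(\ty{w01})$, so to form $(x_\ty{w}^{-1}g)x_\ty{w}$ the only expansion needed is to graft $S_+$ below the leaf $\ty{w1}$ of the top tree $V(\ty{w01})$ of $x_\ty{w}$, equivalently below the leaf $\ty{w11}$ of its bottom tree $V(\ty{w10})$; this yields $x_\ty{w}^{-1}gx_\ty{w}=[B_-,p',B_+]$, where $B_+$ is $V(\ty{w10})$ with $S_+$ grafted below $\ty{w11}$. Finally, using $V(\ty{w10})=V(\ty{w11})$, every leaf of $B_\pm$ is either a leaf of $V(\ty{w11})$ (a leaf of $V(\ty{w})$, or $\ty{w0}$, or $\ty{w10}$) or has $\ty{w11}$ as a prefix (a leaf of $S_\pm$); that is, $B_-$ and $B_+$ are $\ty{w11}$-deferred, and since $p'$ is pure we conclude $x_\ty{w}^{-1}gx_\ty{w}\in\Fbr(\ty{w11})$.

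The computation itself has no genuine difficulty; the only thing that requires care is the combinatorial bookkeeping of which leaves of $V(\ty{w01})$ and $V(\ty{w10})$ are joined by the trivial braid of $x_\ty{w}^{\pm1}$, and hence below which address the excess subtrees $S_\pm$ reappear after the expansions. Once that is pinned down, the identities $V(\ty{w10})=V(\ty{w11})$ and $V(\ty{w01})=V(\ty{w00})$ make the ``deferred'' conclusions immediate, and purity of the braid is preserved throughout precisely because $x_\ty{w}$ carries the trivial braid, so every expansion bifurcates strands at matched positions and introduces no new crossings. A slightly more economical variant would combine this computation with Lemma~\ref{lem:Fbrn_gens}, checking the containment only on the generating set $\{x_{\ty{w1}}\}\cup\Fbr(\ty{w11})$ of $\Fbr(\ty{w1})$, but the amount of diagram-chasing is essentially unchanged.
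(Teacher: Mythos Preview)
Your proof is correct and follows essentially the same approach as the paper's: both arguments expand $x_\ty{w}^{\pm1}$ by grafting the subtree hanging below $\ty{w1}$ in $T_\pm$ onto the leaf $\ty{w11}$ of $V(\ty{w10})$, expand $[T_-,p,T_+]$ by adding a caret at $\ty{w0}$, and read off that the resulting trees are $\ty{w11}$-deferred. The only cosmetic difference is that you compute $x_\ty{w}^{-1}g$ and then $(x_\ty{w}^{-1}g)x_\ty{w}$ in two steps, whereas the paper sets up both expansions of $x_\ty{w}$ and $x_\ty{w}^{-1}$ at once and writes the full product $[V(\ty{w10})\cup U_-,\,p',\,V(\ty{w10})\cup U_+]$ in a single line.
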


\begin{proof}
Let $[T_-,p,T_+]\in \Fbr(\ty{w1})$ with $T_\pm$ two $\ty{w1}$-deferred trees. Let $U_\pm$ be the (possibly trivial) subtree of $T_\pm$ whose root is the vertex of $T_\pm$ with address $\ty{w1}$. Let $V(\ty{w01})\cup U_\pm$ be the tree obtained from $V(\ty{w01})$ by adding $U_\pm$ to the leaf of $V(\ty{w01})$ with address $\ty{w1}$. Let $T_\pm'$ be the tree obtained from $T_\pm$ by adding a caret to the leaf with address $\ty{w0}$. Since $T_\pm$ are both $\ty{w1}$-deferred, these leaves have the same position in the left-to-right numbering of leaves, say they are the $k$th, and let $p'$ be $p$ with its $k$th strand bifurcated, so $[T_-,p,T_+]=[T_-',p',T_+']$. Note that $V(\ty{w01})\cup U_\pm=T_\pm'$. Let $V(\ty{w10})\cup U_\pm$ be the tree obtained from $V(\ty{w10})$ by adding $U_\pm$ to the leaf of $V(\ty{w10})$ with address $\ty{w11}$. Note that $x_{\ty{w}} = [V(\ty{w01}),V(\ty{w10})]$ equals $[V(\ty{w01})\cup U_\pm,V(\ty{w10})\cup U_\pm]$, since if the leaf of $V(\ty{w01})$ with address $\ty{w1}$ is the $\ell$th leaf then the $\ell$th leaf of $V(\ty{w10})$ is the leaf with address $\ty{w11}$. We now compute:
\begin{align*}
x_{\ty{w}}^{-1}[T_-,p,T_+]x_{\ty{w}} &= [V(\ty{w10})\cup U_-,V(\ty{w01})\cup U_-][T_-',p',T_+'][V(\ty{w01})\cup U_+,V(\ty{w10})\cup U_+] \\
&= [V(\ty{w10})\cup U_-,p',V(\ty{w10})\cup U_+]\text{.}
\end{align*}
Since $V(\ty{w10})\cup U_\pm$ are both $\ty{w11}$-deferred, this element lands in $\Fbr(\ty{w11})$. This finishes the proof that $\Fbr(\ty{w1})^{x_{\ty{w}}}\le \Fbr(\ty{w11})$, and the fact that $\Fbr(\ty{w0})^{x_{\ty{w}}^{-1}}\le \Fbr(\ty{w00})$ follows by a parallel argument.
\end{proof}

\begin{corollary}\label{cor:Fbrn_hnn}
For any $\ty{w}\in\{\ty{0},\ty{1}\}^*$, we have that $\Fbr(\ty{w})$ decomposes as a strictly ascending HNN-extension in the following two ways:
\[
\Fbr(\ty{w})=\Fbr(\ty{w0})*_{x_{\ty{w}}^{-1}} \text{ and } \Fbr(\ty{w})=\Fbr(\ty{w1})*_{x_{\ty{w}}}\text{.}
\]
\end{corollary}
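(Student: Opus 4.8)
The plan is to apply the internal-ascending-HNN-extension criterion, Citation~\ref{cit:int_hnn}, twice: once with base group $B=\Fbr(\ty{w1})$ and stable element $z=x_{\ty{w}}$ to obtain $\Fbr(\ty{w})=\Fbr(\ty{w1})*_{x_{\ty{w}}}$, and once with $B=\Fbr(\ty{w0})$ and $z=x_{\ty{w}}^{-1}$ to obtain $\Fbr(\ty{w})=\Fbr(\ty{w0})*_{x_{\ty{w}}^{-1}}$. The two arguments are parallel, the second using the ``$\ty{w0}$'' halves of Lemmas~\ref{lem:Fbrn_gens} and~\ref{lem:conj} in place of the ``$\ty{w1}$'' halves, so I would only lay out the first in detail.

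I would check the three hypotheses of Citation~\ref{cit:int_hnn} for $G=\Fbr(\ty{w})$, $B=\Fbr(\ty{w1})$, $z=x_{\ty{w}}$ as follows. That $B$ and $z$ generate $G$ is exactly Lemma~\ref{lem:Fbrn_gens}. The containment $B^{z}\subseteq B$ follows by combining Lemma~\ref{lem:conj}, which gives $\Fbr(\ty{w1})^{x_{\ty{w}}}\le\Fbr(\ty{w11})$, with the inclusion $\Fbr(\ty{w11})\le\Fbr(\ty{w1})$, which is immediate from the definitions since every $\ty{w11}$-deferred tree is $\ty{w1}$-deferred (more generally $\Fbr(\ty{wv})\le\Fbr(\ty{w})$ for every word $\ty{v}$). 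The remaining hypothesis, that $z^{n}\in B$ only for $n=0$, will follow --- together with the assertion that the HNN-extension is \emph{strictly} ascending --- once we establish the stronger fact $B^{z}\subsetneq B$; and since $B^{z}\le\Fbr(\ty{w11})\le\Fbr(\ty{w1})=B$, it is enough to prove $\Fbr(\ty{w11})\subsetneq\Fbr(\ty{w1})$.

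Proving $\Fbr(\ty{w11})\subsetneq\Fbr(\ty{w1})$ is the one step that is not a direct appeal to a quoted result, and it is where I expect the only genuine (though mild) obstacle: one must rule out the possibility that every element of $\Fbr(\ty{w1})$ secretly admits a representative strand diagram with $\ty{w11}$-deferred trees. I would dispatch this by pushing forward along the tree-forgetting homomorphism $\pi\colon\Fbr\to F$, $[T_-,p,T_+]\mapsto[T_-,T_+]$, which is well defined because an expansion of a pure strand diagram induces an expansion of its underlying tree pair. Writing $F(\ty{v})$ for the subgroup of Thompson's group $F$ defined analogously to $\Fbr(\ty{v})$, we have $\pi(\Fbr(\ty{v}))\subseteq F(\ty{v})$, and under the standard identification of $F$ with the piecewise dyadic-linear homeomorphisms of $[0,1]$ the group $F(\ty{v})$ consists exactly of those homeomorphisms that are the identity on the complement of the standard dyadic interval $I_{\ty{v}}$. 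The tree pair $[V(\ty{w101}),V(\ty{w110})]$ representing $x_{\ty{w1}}$ has both trees $\ty{w1}$-deferred, so its trivial-braid lift $[V(\ty{w101}),1,V(\ty{w110})]$ lies in $\Fbr(\ty{w1})$; but $\pi$ sends it to $x_{\ty{w1}}$, which is non-trivial near the left endpoint of $I_{\ty{w1}}$, a point outside $I_{\ty{w11}}$, so $x_{\ty{w1}}\notin F(\ty{w11})\supseteq\pi(\Fbr(\ty{w11}))$. Hence $[V(\ty{w101}),1,V(\ty{w110})]\notin\Fbr(\ty{w11})$, so $\Fbr(\ty{w11})\subsetneq\Fbr(\ty{w1})$, Citation~\ref{cit:int_hnn} applies, and the decomposition $\Fbr(\ty{w})=\Fbr(\ty{w1})*_{x_{\ty{w}}}$ is strictly ascending. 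Running the symmetric argument with $B=\Fbr(\ty{w0})$, $z=x_{\ty{w}}^{-1}$, and the containments $\Fbr(\ty{w0})^{x_{\ty{w}}^{-1}}\le\Fbr(\ty{w00})\subsetneq\Fbr(\ty{w0})$ yields $\Fbr(\ty{w})=\Fbr(\ty{w0})*_{x_{\ty{w}}^{-1}}$ and completes the proof.
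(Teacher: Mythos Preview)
Your proposal is correct and follows essentially the same approach as the paper: apply Citation~\ref{cit:int_hnn} with Lemma~\ref{lem:Fbrn_gens} for generation and Lemma~\ref{lem:conj} for the conjugation containment, then observe the containment is proper. The only difference is that the paper simply asserts $\Fbr(\ty{w00})\subsetneq\Fbr(\ty{w0})$ (and the $\ty{1}$-version) without justification, whereas you supply an explicit argument via the projection $\pi\colon\Fbr\to F$ and the homeomorphism model of $F$; your extra care here is warranted but not a different strategy.
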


\begin{proof}
For the first decomposition, using Citation~\ref{cit:int_hnn} we need to show that $\Fbr(\ty{w0})$ and $x_{\ty{w}}^{-1}$ generate $\Fbr(\ty{w})$, that $\Fbr(\ty{w0})^{x_{\ty{w}}^{-1}}\subseteq \Fbr(\ty{w0})$, and that $\Fbr(\ty{w0})^{x_{\ty{w}}^{-1}}\neq \Fbr(\ty{w0})$. The first claim follows from Lemma~\ref{lem:Fbrn_gens} and the last two claims follow from Lemma~\ref{lem:conj} since $\Fbr(\ty{w00})\subsetneq \Fbr(\ty{w0})$. For the second decomposition, we need to show that $\Fbr(\ty{w1})$ and $x_{\ty{w}}$ generate $\Fbr(\ty{w})$, that $\Fbr(\ty{w1})^{x_{\ty{w}}}\subseteq \Fbr(\ty{w1})$, and that $\Fbr(\ty{w1})^{x_{\ty{w}}}\neq \Fbr(\ty{w1})$, and again these follow from the lemmas.
\end{proof}

As a remark, these HNN-decompositions are reminiscent of those found in \cite{zaremsky16} for the Lodha--Moore groups introduced in \cite{lodha16}, and we suspect there is a more general class of groups for which this behavior occurs, though we will not attempt to develop this here.

It is sometimes notationally cumbersome to prove things about $\Fbr(\ty{w})$ for arbitrary $\ty{w}$, but the following shows that at least up to isomorphism it is enough to understand the $\Fbr(\ty{1}^n)$.

\begin{lemma}\label{lem:iso_deferred}
Let $\ty{w}\in\{\ty{0},\ty{1}\}^*$ have length $n$. Then $\Fbr(\ty{w})\cong \Fbr(\ty{1}^n)$. More precisely $\Fbr(\ty{w})$ and $\Fbr(\ty{1}^n)$ are conjugate in $\Vbr$.
\end{lemma}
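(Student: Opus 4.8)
The plan is to exhibit an explicit element $g$ of $\Vbr$ conjugating $\Fbr(\ty{w})$ onto $\Fbr(\ty{1}^n)$; this simultaneously proves the isomorphism and the stronger ``conjugate in $\Vbr$'' claim. The natural candidate is a suitable element of Thompson's group $V$, or more generally an element $[V(\ty{1}^n), \sigma, V(\ty{w})]$ of $\Vbr$ whose two trees are the vines $V(\ty{1}^n)$ and $V(\ty{w})$ and whose braid part $\sigma$ is trivial, so that $g$ simply re-labels leaves: it carries the $\ty{w}$-decorated leaf structure of $V(\ty{w})$ to the all-right-caret leaf structure of $V(\ty{1}^n)$. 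First I would check that $V(\ty{w})$ and $V(\ty{1}^n)$ both have exactly $n+1$ leaves, so such a $g$ is a legitimate element of $\Vbr$ (the permutation $\rho(\sigma)$ is the one matching the leaf of address $\ty{w}^{(k)}$ — the $k$th prefix structure — to the corresponding leaf of $V(\ty{1}^n)$; concretely $\ty{0}$-type leaves of $V(\ty{w})$ get sent to the left-hanging leaves of $V(\ty{1}^n)$ and the terminal $\ty{w}$ leaf to the deepest leaf).

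Next I would verify $g^{-1}\Fbr(\ty{w})g = \Fbr(\ty{1}^n)$ by a direct strand-diagram computation, just as in the proofs of Lemma~\ref{lem:conj} and Lemma~\ref{lem:Fbrn_gens}. Given $[T_-,p,T_+]\in\Fbr(\ty{w})$ with $T_\pm$ both $\ty{w}$-deferred, each $T_\pm$ is $V(\ty{w})$ with a (possibly trivial) subtree $U_\pm$ grafted at the vertex of address $\ty{w}$; conjugating by $g=[V(\ty{1}^n),V(\ty{w})]$ replaces the vine $V(\ty{w})$ underneath $U_\pm$ by the vine $V(\ty{1}^n)$, leaving the braid $p$ (expanded appropriately along the leaves, hence still pure) and the subtrees $U_\pm$ intact, so the result is $[V(\ty{1}^n)\cup U_-,\,p',\,V(\ty{1}^n)\cup U_+]$, which is manifestly $\ty{1}^n$-deferred. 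This shows $g^{-1}\Fbr(\ty{w})g\subseteq\Fbr(\ty{1}^n)$; the reverse inclusion follows by running the same argument with $g^{-1}=[V(\ty{w}),V(\ty{1}^n)]$, or simply by symmetry since $\ty{1}^n$ is itself a word of length $n$. Finally, conjugation by $g$ is an automorphism of $\Vbr$ restricting to an isomorphism $\Fbr(\ty{w})\xrightarrow{\sim}\Fbr(\ty{1}^n)$, which is exactly the assertion.

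I expect the only real bookkeeping obstacle to be pinning down the permutation $\rho(\sigma)$ precisely — i.e.\ tracking which leaf of $V(\ty{w})$ goes to which leaf of $V(\ty{1}^n)$ in the numbering — and then confirming that bifurcating strands of the trivial braid to match the grafted subtrees $U_\pm$ does not disturb the computation of the product of strand diagrams. This is the same kind of careful-but-routine leaf-matching argument carried out in Lemmas~\ref{lem:Fbrn_gens} and~\ref{lem:conj}, so once the element $g$ is written down explicitly the verification is essentially mechanical; the key conceptual point is just that a $\ty{w}$-deferred tree is precisely ``a vine $V(\ty{w})$ with arbitrary stuff hanging off its bottom leaf,'' a structure that the relabelling $g$ transports verbatim to the $\ty{1}^n$-deferred setting.
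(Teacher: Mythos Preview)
Your strategy---conjugate by an element of $\Vbr$ built from the two vines $V(\ty{1}^n)$ and $V(\ty{w})$---is exactly the paper's strategy, but there is a genuine gap in your choice of conjugator. You assert that the braid part $\sigma$ may be taken trivial, so that $g=[V(\ty{1}^n),1,V(\ty{w})]\in F$; this does not work. The easiest way to see the failure is the case $n=1$, $\ty{w}=\ty{0}$: since $V(\ty{0})=V(\ty{1})$ is the single caret, your $g$ is the identity, and conjugation by it certainly does not carry $\Fbr(\ty{0})$ onto the distinct subgroup $\Fbr(\ty{1})$. More generally, with trivial braid the $j$th leaf of $V(\ty{w})$ is matched to the $j$th leaf of $V(\ty{1}^n)$, so the special leaf with address $\ty{w}$ (in position $k$, say) is sent to the $k$th leaf of $V(\ty{1}^n)$, which has address $\ty{1}^{k-1}\ty{0}$ rather than $\ty{1}^n$ whenever $k\ne n+1$. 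Your own parenthetical about ``the permutation $\rho(\sigma)$ matching the leaf of address $\ty{w}$\dots'' shows you sensed this, but it contradicts your stipulation that $\sigma$ be trivial.

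The fix, and what the paper does, is to take $\sigma=b\in B_{n+1}$ to be any braid with $\rho(b)$ equal to the transposition $(k\ \ n{+}1)$, so that the $\ty{w}$-leaf really is carried to the $\ty{1}^n$-leaf under expansion. One further correction: once $b$ is nontrivial, the braid in the conjugated element is not ``$p$ expanded'' but rather $(b')^{-1}p\,b'$, where $b'$ is $b$ with its last strand replaced by parallel copies; this is still pure, which is all that is needed, but your sentence ``leaving the braid $p$\dots intact'' is not accurate. With these two repairs your outline becomes the paper's proof verbatim.
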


\begin{proof}
Say the $k$th leaf of $V(\ty{w})$ is the one with address $\ty{w}$, and note that $V(\ty{w})$ has $n+1$ leaves. Let $b\in B_{n+1}$ be any braid mapping under $\rho$ to the transposition switching $k$ and $n+1$ in $S_{n+1}$. We now claim that
\[
[V(\ty{w}),b,V(\ty{1}^n)]^{-1}\Fbr(\ty{w})[V(\ty{w}),b,V(\ty{1}^n)] = \Fbr(\ty{1}^n) \text{.}
\]
An element of $\Fbr(\ty{w})$ is of the form $[T_-,p,T_+]$ for $T_\pm$ $\ty{w}$-deferred, say with $m$ leaves, and $p\in PB_m$. We can expand $[V(\ty{w}),b,V(\ty{1}^n)]$ in two ways, to get $[T_-,b',U_-]$ and $[T_+,b'',U_+]$ for some $\ty{1}^n$-deferred $U_-$ and $U_+$, and some $b',b''\in PB_m$. Note that $b'$ equals $b$ with its last strand replaced by $m-n$ parallel strands, and $b''$ does as well, so actually $b''=b'$. Now we have
\begin{align*}
[V(\ty{w}),b,V(\ty{1}^n)]^{-1}[T_-,p,T_+][V(\ty{w}),b,V(\ty{1}^n)] &= [T_-,b',U_-]^{-1}[T_-,p,T_+][T_+,b',U_+] \\
&= [U_-,(b')^{-1},T_-][T_-,p,T_+][T_+,b',U_+] \\
&= [U_-,(b')^{-1}pb',U_+]\text{.}
\end{align*}
Since $U_\pm$ are $\ty{1}^n$-deferred and the braid $(b')^{-1}pb'$ is pure, this indeed lies in $\Fbr(\ty{1}^n)$. This shows one inclusion of our claimed equality, and a parallel argument gives the other inclusion.
\end{proof}

We close this section with an observation that will not have any direct consequences later but we find to be intriguing in its own right. Since all the $F(\ty{w}) \defeq F\cap \Fbr(\ty{w})$ are known to be isomorphic to $F$, one might guess that all the $\Fbr(\ty{w})$ are isomorphic to $\Fbr$. However, this is not true:

\begin{lemma}\label{lem:quotient_PB}
For any $\ty{w}\in\{\ty{0},\ty{1}\}^*$ with length $n$, the group $\Fbr(\ty{w})$ surjects onto $PB_n$.
\end{lemma}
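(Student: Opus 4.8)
The plan is to produce the surjection $\Fbr(\ty{w}) \to PB_n$ by composing a natural ``forget the carets'' map with strand-deletion homomorphisms of pure braid groups. The key structural observation is that every $\ty{w}$-deferred tree $T$, say with $m$ leaves, is built from the vine $V(\ty{w})$ by attaching carets only at the leaf with address $\ty{w}$ (more precisely, at descendants of that leaf); in particular the leaves of $V(\ty{w})$ other than the one at address $\ty{w}$ persist as leaves of $T$, occupying fixed positions in the left-to-right numbering. Since $V(\ty{w})$ has $n+1$ leaves, $n$ of these distinguished positions survive in any $\ty{w}$-deferred tree, while the remaining $m-n$ leaves of $T$ are all descendants of the address-$\ty{w}$ leaf and sit in a contiguous block.

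First I would define, for a representative $(T_-,p,T_+)$ of an element of $\Fbr(\ty{w})$ with $T_\pm$ both $\ty{w}$-deferred and $p \in PB_m$, the braid $\overline{p} \in PB_n$ obtained by deleting from $p$ all $m-n$ strands coming from the block of leaves descended from address $\ty{w}$, keeping only the $n$ strands at the persistent positions. By the discussion of strand-deletion maps $PB_m \to PB_n$ in Section~\ref{sec:braid} this is a homomorphism on $PB_m$, and it is surjective (delete-then-lift). I would then check this descends to a well-defined map $\psi_{\ty{w}} \colon \Fbr(\ty{w}) \to PB_n$, i.e. that it is invariant under expansions: an expansion adds a caret to some leaf $k$ of $T_+$, bifurcates the $k$th strand of $p$, and adds a caret to the $\rho(k)$th leaf of $T_-$. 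If the bifurcated strand lies in the deleted block, bifurcating it and then deleting both copies gives the same result as deleting the original. If instead $k$ is one of the persistent $n$ positions --- which can only happen when that persistent leaf is itself the address-$\ty{w}$ leaf, i.e. at left/right depth zero, forcing $T_\pm = V(\ty{w})$ --- one checks directly on this base case that after the expansion the new deleted block absorbs the new strand correctly and $\overline{p}$ is unchanged. Multiplicativity follows because, as noted just before the definition of $\Fbr(\ty{w})$, two elements admit expansions with a common $\ty{w}$-deferred middle tree, and deleting strands commutes with composing braids. Surjectivity of $\psi_{\ty{w}}$ is immediate: take $T_\pm = V(\ty{w})$ expanded once at the address-$\ty{w}$ leaf so that $m = n+1$ and there is a single strand in the deleted block, and realize any given element of $PB_n$ as $\overline{p}$ by choosing $p \in PB_{n+1}$ mapping to it under the strand-deletion $PB_{n+1} \to PB_n$.

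The main obstacle I anticipate is the bookkeeping in the well-definedness check --- verifying that an expansion never disturbs the $n$ persistent strands except in the degenerate vine case, and handling the interaction between the permutation $\rho(p)$ (which acts on the $m$ leaves of $T_-$ versus $T_+$) and the identification of persistent positions on the two sides. This is purely combinatorial and follows the same pattern as the expansion computations in the proofs of Lemmas~\ref{lem:Fbrn_gens}, \ref{lem:conj} and~\ref{lem:iso_deferred}; alternatively, one can sidestep much of it by invoking Lemma~\ref{lem:iso_deferred} to reduce to $\ty{w} = \ty{1}^n$, where the persistent leaves are precisely the $n$ leftmost leaves and the deleted block is the contiguous set of rightmost leaves, making the strand-deletion description maximally transparent.
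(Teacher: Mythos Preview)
Your proposal is correct and takes essentially the same approach as the paper. The paper immediately invokes Lemma~\ref{lem:iso_deferred} to reduce to $\ty{w}=\ty{1}^n$, so that the persistent positions are exactly $1,\dots,n$ and the strand-deletion map is simply ``keep the first $n$ strands''; it then checks well-definedness by taking a common $\ty{1}^n$-deferred expansion of two given $\ty{1}^n$-deferred representatives and observing that none of the first $n$ strands are bifurcated in passing to it.

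One small confusion in your well-definedness paragraph: the case ``$k$ is one of the persistent $n$ positions'' never actually arises. An expansion at a persistent leaf produces a tree that is no longer $\ty{w}$-deferred, so $\overline{p}$ is simply undefined on the result and there is nothing to check. For well-definedness you only need to compare $\ty{w}$-deferred representatives, and any two of these are connected by a chain of $\ty{w}$-deferred expansions and reductions (equivalently, share a common $\ty{w}$-deferred expansion), all of which touch only leaves in the $\ty{w}$-block. Your sentence ``which can only happen when that persistent leaf is itself the address-$\ty{w}$ leaf'' is self-contradictory as written --- the persistent leaves are by your own definition the non-$\ty{w}$ leaves of $V(\ty{w})$ --- and should simply be deleted. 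Once that spurious case is dropped, your argument is exactly the paper's.
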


\begin{proof}
By Lemma~\ref{lem:iso_deferred} we can assume $\ty{w}=\ty{1}^n$. For $m\ge n$ let $\phi_n\colon PB_m\to PB_n$ be the projection given by deleting all but the first $n$ strands of a braid (we use the same notation $\phi_n$ for any $m$). Given an element $[T_-,p,T_+]$ of $\Fbr(\ty{1}^n)$ call $(T_-,p,T_+)$ a \emph{$\ty{1}^n$-deferred representative} of its class $[T_-,p,T_+]$ if $T_\pm$ are $\ty{1}^n$-deferred (so being in $\Fbr(\ty{1}^n)$ is equivalent to admitting a $\ty{1}^n$-deferred representative). Now define $\psi \colon \Fbr(\ty{1}^n) \to PB_n$ by sending $[T_-,p,T_+]$ to $\phi_n(p)$, where $(T_-,p,T_+)$ is a $\ty{1}^n$-deferred representative of its class. We need to check that this is a well defined homomorphism, and then we will be done since it is obviously surjective. Suppose $(T_-,p,T_+)$ and $(U_-,q,U_+)$ are $\ty{1}^n$-deferred representatives of the same class. Then they have a common expansion $(T_-',p',T_+')=(U_-',q',U_+')$. Since all of $T_\pm$ and $U_\pm$ all $\ty{1}^n$-deferred, without loss of generality all of $T_\pm'$ and $U_\pm'$ are $\ty{1}^n$-deferred. In particular none of the first $n$ strands of $p$ were bifurcated in moving from $p$ to $p'$, and similarly none of the first $n$ strands of $q$ were bifurcated to get $q'$. The fact that $p'=q'$ thus implies that $\phi_n(p)=\phi_n(p')=\phi_n(q')=\phi_n(q)$. This shows that $\psi$ is well defined. The fact that it is a homomorphism now follows simply because $\phi_n$ is a homomorphism.
\end{proof}

\begin{corollary}
For any $\ty{w}\in\{\ty{0},\ty{1}\}^*$ with length at least $3$, the group $\Fbr(\ty{w})$ is not isomorphic to $\Fbr$, nor to any quotient of $\Fbr$.
\end{corollary}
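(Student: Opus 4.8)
The plan is to argue by contradiction, feeding Lemma~\ref{lem:quotient_PB} into the fact (recalled in the introduction, from \cite{zaremsky14}) that $\Sigma^1(\Fbr)$ is the complement in $\Sigma(\Fbr)\cong S^3$ of just \emph{two} points, so that $\Sigma^1(\Fbr)^c$ is finite. Suppose $\ty{w}$ has length $n\ge 3$ and assume, toward a contradiction, that $\Fbr(\ty{w})$ is a quotient of $\Fbr$ (this includes the case $\Fbr(\ty{w})\cong\Fbr$, via the identity quotient). Then there is a surjection $\Fbr\twoheadrightarrow\Fbr(\ty{w})$. I would compose this with the surjection $\Fbr(\ty{w})\twoheadrightarrow PB_n$ of Lemma~\ref{lem:quotient_PB}, with the strand-deletion epimorphism $PB_n\twoheadrightarrow PB_3$, and with the projection of $PB_3$ (classically isomorphic to $F_2\times\Z$) onto a free direct factor $F_2$ of rank two, obtaining an epimorphism $\pi\colon\Fbr\twoheadrightarrow F_2$.

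The contradiction then comes from playing $\pi$ against $\Sigma^1$. Because $\pi$ is onto, precomposition is an injective linear map $\pi^*\colon\Hom(F_2,\R)\hookrightarrow\Hom(\Fbr,\R)$ carrying nonzero characters to nonzero characters, hence it descends to an injection of character spheres $\Sigma(F_2)\hookrightarrow\Sigma(\Fbr)$. By the $m=1$ case of Citation~\ref{cit:hi_dim_quotients} — which, as remarked there, needs no splitting hypothesis — if $[\chi\circ\pi]\in\Sigma^1(\Fbr)$ then $[\chi]\in\Sigma^1(F_2)$; but $\Sigma^1(F_2)=\varnothing$ by \cite{bieri87}. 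Hence $\pi^*$ carries \emph{all} of $\Sigma(F_2)\cong S^1$ into $\Sigma^1(\Fbr)^c$, and since $\pi^*$ is injective this forces $\Sigma^1(\Fbr)^c$ to be infinite, contradicting $|\Sigma^1(\Fbr)^c|=2$.

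I do not anticipate a genuine difficulty; the point worth flagging is why $\Sigma^1$ is needed at all. The crudest invariant, abelianization, already disposes of $n\ge 4$: there $\Fbr(\ty{w})$ surjects onto $PB_n$, whose abelianization $\Z^{\binom{n}{2}}$ has rank $\binom n2>4=\operatorname{rank}\Fbr^{\mathrm{ab}}$, so $\Fbr(\ty{w})$ cannot be a quotient of $\Fbr$. For $n=3$, however, $\binom 32=3\le 4$, so rank counts and similar soft invariants are inconclusive, and one genuinely needs the sharp input that $\Sigma^1(\Fbr)$ is cofinite — equivalently, that $\Fbr$ admits no epimorphism onto a group $H$ with $\Sigma^1(H)^c$ infinite, and $F_2$ (hence $PB_3$, hence $PB_n$ for $n\ge3$, hence $\Fbr(\ty{w})$ for $|\ty{w}|\ge3$) is such a group. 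Everything else — composites of surjections are surjections, $\pi^*$ is injective on character spheres, $\Sigma^1(F_2)=\varnothing$, $PB_3\cong F_2\times\Z$ — is routine bookkeeping.
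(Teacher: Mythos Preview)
Your argument is correct, but it takes a genuinely different route from the paper. The paper invokes \cite[Theorem~2.1]{zaremsky14}, which says every non-abelian quotient of $\Fbr$ contains a copy of Thompson's group $F$; since $PB_n$ (for $n\ge 3$) is a non-abelian quotient of $\Fbr(\ty{w})$ by Lemma~\ref{lem:quotient_PB} but cannot contain $F$ (being, e.g., linear or residually finite), $\Fbr(\ty{w})$ cannot be a quotient of $\Fbr$. Your approach instead passes further to a free quotient $F_2$ and exploits the computation of $\Sigma^1(\Fbr)$: an epimorphism $\Fbr\twoheadrightarrow F_2$ would embed the circle $\Sigma(F_2)$ into the two-point set $\Sigma^1(\Fbr)^c$. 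The paper's route is shorter and uses a single structural fact about $\Fbr$, while yours stays entirely within the BNSR-invariant framework that the rest of the paper develops, and has the mild advantage of not needing the quotient-classification theorem from \cite{zaremsky14} --- only the $\Sigma^1$ computation, which is arguably a lighter input. Both arguments ultimately rest on Lemma~\ref{lem:quotient_PB} and on some feature of $PB_n$ (containing $F_2$, versus not containing $F$) that is incompatible with being a quotient of $\Fbr$.
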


\begin{proof}
By \cite[Theorem~2.1]{zaremsky18} every non-abelian quotient of $\Fbr$ contains a copy of Thompson's group $F$. By Lemma~\ref{lem:quotient_PB} $\Fbr(\ty{w})$ surjects onto $PB_n$, which is non-abelian since $n\ge 3$, and does not contain $F$ (for many obvious reasons, such as being finite dimensional, linear, residually finite, and so forth).
\end{proof}

It is unclear whether $\Fbr(\ty{w})$ for $\ty{w}$ of length $1$ or $2$ is isomorphic to $\Fbr$, but we expect not. In fact we believe the length of $\ty{w}$ precisely determines the isomorphism type of $\Fbr(\ty{w})$, by virtue of the rank of the abelianization of $\Fbr(\ty{w})$ being different for different lengths of $\ty{w}$, but we will leave the verification of this for the future.

\section{The cube complexes}\label{sec:cubes}

In \cite{bux16} cube complexes were introduced on which $\Vbr$ and $\Fbr$ act nicely. There the term ``Stein space'' was used, and more recently the term ``Stein--Farley complex'' has become common, in reference to seminal work of Stein \cite{stein92} and Farley \cite{farley03} on the analogous complexes for Thompson's groups $F$, $T$ and $V$. These cube complexes have all been used to prove their respective groups are of type $\F_\infty$.

Here we will primarily be interested in the Stein--Farley complex for $\Fbr$, and so we will denote this one by $X$. The Stein--Farley complex for $\Vbr$ will be denoted $X(\Vbr)$, and will only be discussed briefly in Remark~\ref{rmk:Vbr_CAT0}. We caution that in \cite{bux16}, $X$ denoted the complex for $\Vbr$ and $X(\Fbr)$ denoted the complex for $\Fbr$. We will use (what we are calling) $X$ for two main purposes. The first is establishing a family of arbitrarily highly connected subcomplexes on which $\Fbr$ acts cocompactly (in Lemma~\ref{lem:stein_truncate}), which a glance at Definition~\ref{def:bnsr} shows is crucial when computing BNSR-invariants. The second is to establish subcomplexes $X(n)$ on which the $\Fbr(\ty{1}^n)$ act nicely, and which will be used in the next section to prove all the $\Fbr(\ty{w})$ are of type $\F_\infty$.

\subsection{The construction}\label{sec:construct_stein}

Before we recall the construction of the Stein--Farley complex for $\Fbr$ we need some setup. By a \emph{forest} we will always mean a finite sequence of (finite rooted binary) trees, whose roots and leaves comprise the \emph{roots} and \emph{leaves} of the forest. The notions of expansion and reduction for triples $(T_-,p,T_+)$ make sense also for triples $(F_-,p,F_+)$, where $F_\pm$ are forests. We write $[F_-,p,F_+]$ for the equivalence class of $(F_-,p,F_+)$ under expansions and reductions. Since two such triples $(F_-,p,F_+)$ and $(E_-,q,E_+)$ admit expansions $(F_-',p',F_+')$ and $(E_-',q',E_+')$ with $F_+'=E_-'$ if and only if $F_+$ and $E_-$ have the same number of roots, in this and only this case it makes sense to take the product
\[
[F_-,p,F_+][E_-,q,E_+] = [F_-',p'q',E_+']\text{.}
\]
In this way the set of equivalence classes $[F_-,p,F_+]$ has a groupoid structure.

For each $n$, there is a copy of $PB_n$ inside this groupoid, namely $\{[1_n,p,1_n]\mid p\in PB_n\}$, where $1_n$ is the trivial forest with $n$ roots. This acts from the right on the set $\{[F_-,p,F_+]\mid F_+$ has $n$ roots$\}$, and it makes sense to consider the orbit space, the elements of which are groupoid cosets. Write $[F_-,p,F_+]_{PB}$ for the coset $[F_-,p,F_+]\{[1_n,p,1_n]\mid p\in PB_n\}$, where $n$ is the number of roots of $F_+$. When we do not need or want to specify a number of roots for a trivial forest we will sometimes write $1$ instead of $1_n$.

\begin{definition}[Elementary]
Call a forest \emph{elementary} if each of its trees is either trivial or a single caret. For $E$ elementary call $[E,1,1]$ an \emph{elementary split} and $[1,1,E]$ an \emph{elementary merge}.
\end{definition}

\begin{definition}[Stein--Farley complex]
The \emph{Stein--Farley complex} for $\Fbr$, which we denote by $X$, is the cube complex constructed as follows. The $0$-cubes are the equivalence classes $[T,p,F]_{PB}$ as above, with $T$ a tree. The $1$-cubes are given by the rule that $[T,p,F]_{PB}$ and $[T,p,F][E,1,1]_{PB}$ span a $1$-cube whenever $E$ is an elementary forest with exactly one non-trivial tree (and the same number of roots as $F$). Now let $[T,p,F]_{PB}$ be a $0$-cube and $E$ an elementary forest with the same number of roots as $F$. Say $E$ has $k$ non-trivial trees. The set of $0$-cubes of the form $[T,p,F][E',1,1]_{PB}$ for $E'$ a subforest of $E$ span the $1$-skeleton of a $k$-cube, and we declare that they span a $k$-cube in $X$.
\end{definition}

\begin{cit}\cite[Section~2]{bux16}
$X$ is contractible.
\end{cit}

In Theorem~\ref{thrm:X_CAT0} we will see that it is even $\CAT(0)$.

Note that $\Fbr$ acts cellularly on $X$ from the left. There is a natural $\Fbr$-invariant function from the set of $0$-cubes of $X$ to $\N$ given by sending $[T,p,F]_{PB}$ to the number of roots of $F$. This function can be extended to an affine function on each cube of $X$ that is non-constant on positive-dimensional cubes, which induces a Morse function
\[
h\colon X \to \R \text{.}
\]
Note that the action of $\Fbr$ on $X$ preserves $h$-values. By the $\Fbr$ version of \cite[Lemma~2.5]{bux16}, the action of $\Fbr$ on $X^{n\le h\le m}$ is cocompact for all $n\le m<\infty$.

Each cube in $X$ has a unique $0$-face at which $h$ is maximized, which we will call the \emph{top} of the cube, and a unique $0$-face at which $h$ is minimized, called the \emph{bottom}. The ascending link $\alk x$ of a $0$-cube $x$ consists of all the directions into cubes having $x$ as their bottom, and the descending link $\dlk x$ consists of all directions into cubes with $x$ as their top.

\begin{lemma}\label{lem:alk_cible}
For any $0$-cube $x$ in $X$, say with $h(x)=k$, the ascending link $\alk x$ is a $(k-1)$-simplex, hence is contractible.
\end{lemma}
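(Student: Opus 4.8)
The plan is to analyze the ascending link of a $0$-cube $x=[T,p,F]_{PB}$ directly from the definition of the cube complex. By construction, the cubes having $x$ as their bottom are exactly those spanned by $0$-cubes of the form $x\cdot[E',1,1]_{PB}$ for $E'$ a subforest of a fixed elementary forest $E$ with the same number of roots as $F$; such an $E$ has at most $k$ non-trivial trees (a single caret attached to each of the $k$ roots of $F$), and the maximal such $E$ is the one with a caret on every root. The directions into these cubes at $x$ are therefore in bijection with the non-empty subforests of this maximal elementary forest, which amounts to choosing a non-empty subset of $\{1,\dots,k\}$ (the roots to which we attach a caret). So first I would spell out this bijection: a vertex of $\alk x$ corresponds to a choice of a single root $i\in\{1,\dots,k\}$, and a simplex corresponds to a choice of a non-empty subset $S\subseteq\{1,\dots,k\}$, with face relations given by inclusion of subsets.

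The key point is that any subset of a valid set of roots is again valid, i.e.\ the elementary forests here form a simplex rather than some more complicated poset: since the $k$ carets on the $k$ distinct roots of $F$ are ``independent'' moves (attaching a caret to one root does not interfere with attaching one to another), every subset $S\subseteq\{1,\dots,k\}$ genuinely indexes a cube with $x$ as bottom, and these fit together exactly as the faces of a $(k-1)$-simplex. Thus I would identify $\alk x$ with the full simplex on the vertex set $\{1,\dots,k\}$, which is a $(k-1)$-simplex. Since a simplex is contractible, the conclusion follows.

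The main obstacle — really the only thing requiring care — is checking that no coincidences or degeneracies occur: that the $k$ single-caret directions at $x$ are genuinely distinct vertices of $\lk x$ (so that we get a $(k-1)$-simplex and not something smaller), and that the $PB$-coset quotient does not identify distinct subforests. Here one uses that $E'$ being a subforest of the maximal elementary forest is recorded faithfully in $[T,p,F][E',1,1]_{PB}$ because the right $PB$-action only rebraids strands and cannot turn a trivial tree into a caret or vice versa; hence the combinatorial type of $E'$ is a well-defined invariant of the coset. Once that is in place, the statement that $\alk x$ is exactly the simplex on $k$ vertices is immediate, and contractibility of simplices (being convex, or directly coning off) finishes it.

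Finally I would remark on the degenerate case $k=0$ (equivalently, $F$ has no roots, which does not occur for honest trees but for completeness): a $(-1)$-simplex is the empty simplex, consistent with there being no ascending directions. For $k\ge 1$ the $(k-1)$-simplex is non-empty and contractible, which is all that is needed downstream.
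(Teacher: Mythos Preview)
Your argument is correct and follows essentially the same approach as the paper: both identify the unique maximal elementary forest with $k$ roots (a caret on every root), observe that every cube with $x$ as its bottom is a face of the resulting $k$-cube, and conclude that $\alk x$ is a $(k-1)$-simplex. Your extra care about the $PB$-coset not collapsing distinct subforests is a reasonable sanity check, though the paper treats this as implicit in the construction of $X$; the remark on $k=0$ is unnecessary since $0$-cubes of $X$ have $h\ge 1$.
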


\begin{proof}
There is a unique elementary forest with $k$ roots such that every elementary forest with $k$ roots is a subforest of that one; it has $k$ carets. This implies that there is a unique cube with $x$ as its bottom $0$-face such that every cube with $x$ as its bottom $0$-face is a face of that one; it is $k$-dimensional. We conclude that $\alk x$ is a $(k-1)$-simplex, hence contractible.
\end{proof}

Intuitively, $\alk x$ is all the ways of adding splits to the ``feet'' of $x$, so the rule ``split every foot'' gives this unique maximal simplex. As a remark, this does not work for $\dlk x$ because of the modding out of the actions of the $PB_n$; ``merge every foot'' is not well defined since an arbitrary amount of braiding is allowed first. Indeed, $\dlk x$ is not contractible, but \cite[Corollary~4.4,Corollary~4.5]{bux16} says that it does get arbitrarily highly connected as $h(x)$ tends to infinity. We use this fact to prove the following:

\begin{lemma}\label{lem:stein_truncate}
For any $m$, the complex $X^{2m\le h\le 4m}$ is $(m-1)$-connected.
\end{lemma}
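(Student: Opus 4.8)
The plan is to apply the Morse Lemma (Lemma~\ref{lem:morse}) twice to the function $h$ on $X$, peeling off first the part of the complex above $h = 4m$ using ascending links, and then the part below $h = 2m$ using descending links, each time controlling connectivity by the connectivity of the relevant links. Since $X$ is contractible, it is in particular $(m-1)$-connected, and the goal is to transfer this connectivity down to the finite slab $X^{2m \le h \le 4m}$.

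First I would use the ascending-link half of Lemma~\ref{lem:morse} with $p = 2m$, $q = 4m$, $r = \infty$. By Lemma~\ref{lem:alk_cible}, for any $0$-cube $x$ with $h(x) = k$ the ascending link $\alk x$ is a $(k-1)$-simplex, hence contractible; the same should hold (or be easily seen to hold) for the ascending link taken in $X^{h \le \infty} = X$, which is all of $\alk x$. In particular, for every $0$-cube $x$ in $X^{2m \le h < 4m}$ we have $h(x) \ge 2m$, so $\alk x$ is $(2m-1)$-simplex, which is $(2m-2)$-connected, in particular $(m-1)$-connected. Thus the inclusion $X^{2m \le h \le 4m} \hookrightarrow X^{2m \le h}$ induces isomorphisms on $\pi_k$ for $k \le m-1$ and an epimorphism on $\pi_m$. (Here I am being slightly generous with the connectivity range; since contractible ascending links give the conclusion for all $n$, the inclusion $X^{2m \le h \le 4m} \hookrightarrow X^{2m \le h}$ is in fact a homotopy equivalence, so it certainly induces isomorphisms on $\pi_k$ for all $k \le m-1$.)

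Next I would use the descending-link half of Lemma~\ref{lem:morse} with $p = 2m$, $q = 2m$, $r = \infty$, to compare $X^{h = 2m}$-level-and-up, but this is not quite what is wanted; instead the cleaner route is to compare $X^{2m \le h}$ with $X^{h \le \infty} = X$ via the descending-link half applied with $p = -\infty$, $q = 2m$, $r = \infty$: for every $0$-cube $x$ in $X^{2m < h}$ the descending link $\dlk_{X}x = \dlk x$ must be $(m-1)$-connected. This is exactly where \cite[Corollary~4.4, Corollary~4.5]{bux16} enters: those results say $\dlk x$ becomes arbitrarily highly connected as $h(x) \to \infty$, so there is a threshold beyond which $\dlk x$ is $(m-1)$-connected. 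The subtlety — and the main obstacle — is that I need the threshold to be at most $2m$, i.e., I need $\dlk x$ to be $(m-1)$-connected already once $h(x) > 2m$. I would check the explicit connectivity bound from \cite{bux16} (which, as is typical for these matching-complex arguments, gives $(n-1)$-connectivity of $\dlk x$ once $h(x)$ is roughly $2n$ or $3n$ or so) and confirm that $h(x) \ge 2m+1$ suffices for $(m-1)$-connectivity; if the bound in \cite{bux16} is stated with a slightly different constant, the "$2m$" and "$4m$" in the statement are presumably chosen precisely to make this work, so it is just a matter of quoting the right inequality. Granting this, the Morse Lemma gives that $X^{2m \le h} \hookrightarrow X$ induces isomorphisms on $\pi_k$ for $k \le m-1$.

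Finally I would combine the two steps: $\pi_k(X^{2m \le h \le 4m}) \cong \pi_k(X^{2m \le h}) \cong \pi_k(X) = 0$ for all $k \le m-1$, and since $X^{2m \le h \le 4m}$ is also connected (again by the Morse Lemma comparison in low degrees, or directly), it is $(m-1)$-connected, as claimed. The only genuinely non-formal point is pinning down that the connectivity bound for descending links from \cite{bux16} kicks in by height $2m$; everything else is a routine double application of the Morse Lemma together with the contractibility of $X$ and of ascending links.
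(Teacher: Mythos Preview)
Your overall strategy is right, but you have the two Morse Lemma applications reversed, and this is not just a cosmetic slip. With $p=2m$, $q=4m$, $r=\infty$, the ascending-link half of Lemma~\ref{lem:morse} concludes something about $X^{q\le h\le r}=X^{4m\le h}\hookrightarrow X^{2m\le h}$, not about $X^{2m\le h\le 4m}\hookrightarrow X^{2m\le h}$. Likewise, with $p=-\infty$, $q=2m$, $r=\infty$, the descending-link half concludes something about $X^{h\le 2m}\hookrightarrow X$, not about $X^{2m\le h}\hookrightarrow X$. The correct pairing is the opposite: use \emph{ascending} links of $0$-cubes with $h<2m$ (all contractible by Lemma~\ref{lem:alk_cible}) to conclude $X^{2m\le h}\hookrightarrow X$ is a homotopy equivalence, hence $X^{2m\le h}$ is contractible; then use \emph{descending} links of $0$-cubes with $h>4m$, taken in $X^{2m\le h}$, to conclude $X^{2m\le h\le 4m}\hookrightarrow X^{2m\le h}$ is $(m-1)$-connected.

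This reversal also led you to misidentify the threshold. The descending-link connectivity from \cite{bux16} is needed for $h(x)>4m$, not $h(x)>2m$; the bound there gives $(m-1)$-connectivity precisely once $h(x)$ exceeds (roughly) $4m$, and this is why the upper cutoff in the statement is $4m$. You also missed the reason the lower cutoff is $2m$: the descending link that must be controlled is $\dlk_{X^{2m\le h}}x$, not $\dlk_X x$, and \cite{bux16} only tells you about the latter. The paper's key observation is that an elementary merge can involve at most $h(x)/2$ carets, so every cube with top $x$ has its bottom at height at least $h(x)/2>2m$; hence $\dlk_{X^{2m\le h}}x=\dlk_X x$ and the citation applies. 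Without this observation the argument does not close.
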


\begin{proof}
The ascending link of any $0$-cube is contractible (Lemma~\ref{lem:alk_cible}). Since $X$ is contractible, Lemma~\ref{lem:morse} implies $X^{p\le h}$ is contractible for any $p$; in particular $X^{2m\le h}$ is contractible. Now let $x$ be a $0$-cube in $X$ with $h(x)>4m$. The descending link of $x$ in $X$ lies entirely in $X^{2m\le h}$, since an elementary forest describing a merge of $x$ can feature at most $h(x)/2$ carets. Hence the descending link $\dlk_{X^{2m\le h}}x$ equals $\dlk_X x$, and since $h(x)>4m$ \cite[Corollary~4.5]{bux16} says this is $(m-1)$-connected. Now Lemma~\ref{lem:morse} says that $X^{2m\le h\le 4m}$ is $(m-1)$-connected.
\end{proof}

\subsection{The complexes $X(n)$}\label{sec:Xn}

A crucial fact we will need to compute the $\Sigma^m(\Fbr)$ is that the $\Fbr(\ty{1}^n)$, and hence all the $\Fbr(\ty{w})$, are of type $\F_\infty$. To do this we first establish an $\Fbr(\ty{1}^n)$-stable subcomplex $X(n)$ of $X$, and then use it in the next section to prove that $\Fbr(\ty{1}^n)$ is of type $\F_\infty$. For brevity we will write $\Fbr(n)$ for $\Fbr(\ty{1}^n)$ and use the term \emph{$n$-deferred} to mean $\ty{1}^n$-deferred. Also, call a forest \emph{$n$-bare} if its first $n$ trees are trivial.

\begin{definition}
Let $X(n)$ be the full subcomplex of $X$ supported on those $0$-cubes $[T,p,F]_{PB}$ such that $T$ is $n$-deferred and $F$ is $n$-bare.
\end{definition}

As a remark, every $0$-cube $x$ in $X(n)$ satisfies $h(x)\ge n+1$. This is because an $n$-bare forest has at least $n$ roots and has exactly $n$ roots only if it is trivial, but $n$-deferred trees have at least $n+1$ leaves.

\begin{lemma}\label{lem:Xn_stable}
The action of $\Fbr(n)$ on $X$ stabilizes $X(n)$.
\end{lemma}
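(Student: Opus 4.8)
The plan is to show that the left action of an arbitrary element $g=[U_-,q,U_+]\in\Fbr(n)$, chosen with $n$-deferred representatives $U_\pm$, sends any $0$-cube of $X(n)$ to a $0$-cube of $X(n)$; since $X(n)$ is by definition the \emph{full} subcomplex on its $0$-cubes, stabilizing the $0$-cubes is enough to stabilize $X(n)$ (a cube lies in $X(n)$ iff all its vertices do, and $g$ is a simplicial automorphism of $X$). So the whole lemma reduces to a statement about $0$-cubes.

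First I would take a $0$-cube $x=[T,p,F]_{PB}$ of $X(n)$, so $T$ is $n$-deferred and $F$ is $n$-bare, and compute $g.x=[U_-,q,U_+][T,p,F]_{PB}$. To form this product in the groupoid, expand $[U_-,q,U_+]$ and $[T,p,F]$ to a common middle: since $U_+$ and $T$ are both $n$-deferred trees, and $n$-deferred-ness is preserved under adding carets \emph{below the $\ty{1}^n$-part}, one can choose a common expansion in which all the trees involved remain $n$-deferred — this is exactly the kind of bookkeeping already carried out in the proofs of Lemma~\ref{lem:Fbrn_gens} and Lemma~\ref{lem:iso_deferred}. After expanding, write $[U_-,q,U_+]=[U_-',q',T']$ and $[T,p,F]=[T',p',F']$ with $T'$ the common $n$-deferred tree, $U_-'$ an $n$-deferred tree, $F'$ obtained from $F$ by adding carets, and $q',p'$ the correspondingly bifurcated braids; then $g.x=[U_-',q'p',F']_{PB}$.

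It remains to check the two conditions defining $X(n)$ for this new $0$-cube. That $U_-'$ is $n$-deferred is immediate from the previous paragraph. For the $n$-bare condition: $F$ is $n$-bare, i.e.\ its first $n$ trees are trivial; the carets added to $F$ in the expansion to $F'$ are added at leaves that match, under the left-to-right numbering, the carets added to $T$ in expanding to $T'$. Because $T$ is $n$-deferred, its first $n$ leaves (in the tree-order sense relevant to $V(\ty{1}^n)$) are precisely the leaves of the vine $V(\ty{1}^n)$ at addresses $\ty{1}^k\ty{0}$ for $k<n$, together with $\ty{1}^n$; the only expansions needed to reach a common middle with another $n$-deferred tree occur strictly below the address $\ty{1}^n$, hence they never subdivide any of the first $n$ roots of the corresponding forest. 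Therefore the first $n$ trees of $F'$ are still trivial, so $F'$ is $n$-bare. Hence $g.x$ is a $0$-cube of $X(n)$.

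The main obstacle — really the only subtle point — is the bookkeeping in the second paragraph: verifying that the common expansion of $U_+$ and $T$ can be taken through $n$-deferred trees only, and, crucially, that the resulting carets added to the $F$-side of $x$ land strictly to the right of the first $n$ roots (so $n$-bareness survives). This is the same phenomenon exploited repeatedly in Section~\ref{sec:deferred}: an $n$-deferred tree differs from the vine $V(\ty{1}^n)$ only in the subtree hanging off the address $\ty{1}^n$, so all "interesting" combinatorics is confined to that last branch, and the first $n$ roots/strands are never touched. Once that is spelled out, the rest is the routine groupoid-product computation, and the passage from $0$-cubes to all cubes is automatic by fullness of $X(n)$.
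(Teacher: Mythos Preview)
Your proposal is correct and follows essentially the same approach as the paper: reduce to $0$-cubes (by fullness of $X(n)$), pick $n$-deferred/$n$-bare representatives, observe that a common expansion of two $n$-deferred trees can be achieved by adding carets only below the address $\ty{1}^n$, and conclude that the induced carets on the forest side never touch the first $n$ roots so $n$-bareness is preserved. The paper's proof is just a terser version of exactly this argument (and one small slip in your write-up: the address $\ty{1}^n$ is generally \emph{not} a leaf of an $n$-deferred tree, so the ``first $n$ leaves'' are only those at $\ty{1}^k\ty{0}$ for $0\le k<n$ --- but this does not affect the argument).
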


\begin{proof}
Let $[T_-,p,T_+]\in \Fbr(n)$ and $[T,q,F]_{PB}\in X(n)^{(0)}$, say $T_-,T_+,T$ are $n$-deferred trees and $F$ is $n$-bare. We need to show that $[T_-,p,T_+][T,q,F]_{PB}\in X(n)^{(0)}$. Since $T_+$ and $T$ are $n$-deferred, we can perform expansions $[T_-,p,T_+]=[T_-',p',T_+']$ and $[T,q,F]=[T',q',F']$ such that $T_+'=T'$ and such that in the course of doing the expansions we never add carets to any of the first $n$ leaves of the relevant trees or forests. In particular $T_-'$ is $n$-deferred and $F'$ is $n$-bare, so $[T_-,p,T_+][T,q,F]_{PB} = [T_-',p'q',F']_{PB}$ does indeed lie in $X(n)^{(0)}$.
\end{proof}

\begin{lemma}\label{lem:Fbrn_cocpt}
The action of $\Fbr(n)$ is cocompact on each $X(n)^{h\le q}$ for $q<\infty$.
\end{lemma}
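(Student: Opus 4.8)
The goal is to show that $\Fbr(n)$ acts cocompactly on $X(n)^{h\le q}$ for each finite $q$. The plan is to reduce this to the already-known cocompactness of the $\Fbr$-action on sublevel sets of $X$ (the $\Fbr$ version of \cite[Lemma~2.5]{bux16}) and combine it with the structural description of $X(n)$. Since cocompactness on a sublevel set is equivalent to there being finitely many orbits of cells in each dimension up to height $q$, and $X(n)$ is a full subcomplex, it suffices to show there are finitely many $\Fbr(n)$-orbits of $0$-cubes $[T,p,F]_{PB}$ in $X(n)$ with $h([T,p,F]_{PB})\le q$; the higher cells are then controlled because each cube is determined by its bottom $0$-face together with an elementary forest, and there are only finitely many elementary forests with a bounded number of roots.

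First I would recall that a $0$-cube of $X(n)$ has the form $[T,p,F]_{PB}$ with $T$ an $n$-deferred tree, $F$ an $n$-bare forest, and the number of roots of $F$ equal to $h([T,p,F]_{PB})\le q$. Since $F$ is $n$-bare, it has at least $n$ roots and at most $q$ roots, so the number of leaves of $F$ (equivalently, the number of leaves of $T$) is bounded: it is at most some $N=N(n,q)$. Thus there are only finitely many possibilities for the pair of shapes $(T,F)$. Fixing such a pair, the remaining datum is the coset $[T,p,F]_{PB}$, i.e.\ the double coset of $p\in PB_m$ (where $m$ is the number of leaves). The key point is that given two $0$-cubes $[T,p,F]_{PB}$ and $[T,p',F]_{PB}$ with the \emph{same} tree $T$ and forest $F$, the element $[T,pp'^{-1}T^{-1}\cdots]$—more precisely, the element $[T,p(p')^{-1},T]$ of $\Fbr$—carries one to the other, and I would check this element actually lies in $\Fbr(n)$: indeed $T$ is $n$-deferred by hypothesis and $p(p')^{-1}$ is pure, so $[T,p(p')^{-1},T]$ is an element of $\Fbr(n)$, and it sends $[T,p',F]_{PB}$ to $[T,p(p')^{-1}p',F]_{PB}=[T,p,F]_{PB}$. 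Hence all $0$-cubes with a fixed shape pair $(T,F)$ lie in a single $\Fbr(n)$-orbit, and since there are finitely many shape pairs, there are finitely many $\Fbr(n)$-orbits of $0$-cubes in $X(n)^{h\le q}$.

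To finish, I would pass from $0$-cubes to all cells: every cube of $X$ (hence of the full subcomplex $X(n)$) is spanned by the $0$-cubes $[T,p,F][E',1,1]_{PB}$ for $E'$ ranging over subforests of a fixed elementary forest $E$ with the same number of roots as $F$; the cube is determined by its bottom $0$-face $[T,p,F]_{PB}$ together with the choice of $E$. If the cube lies in $X(n)^{h\le q}$ then its bottom has height $\le q$, so $F$ has at most $q$ roots, and there are only finitely many elementary forests $E$ on that many roots. Combining this with the finitely many $\Fbr(n)$-orbits of $0$-cubes of height $\le q$ from the previous paragraph, and noting the $\Fbr(n)$-action on cubes is compatible with the action on their bottom vertices, we get finitely many $\Fbr(n)$-orbits of cubes in $X(n)^{h\le q}$, which is cocompactness.

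\textbf{Main obstacle.} The only genuinely delicate point is the orbit-transitivity claim of the middle paragraph: one must be careful that the element $[T,p(p')^{-1},T]$ really does land in $\Fbr(n)$ and that the groupoid bookkeeping (expansions, the $PB$-coset identification) is consistent, in the spirit of the computations in Lemmas~\ref{lem:Xn_stable} and~\ref{lem:iso_deferred}. Everything else is a routine finiteness count once the shapes are bounded.
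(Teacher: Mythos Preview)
Your overall strategy (reduce to finitely many orbits of $0$-cubes, then use that only finitely many cubes sit above each bottom $0$-face) is exactly the paper's strategy, and your final paragraph about passing from $0$-cubes to arbitrary cubes is fine. But there is a genuine error in the middle paragraph.

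The claim ``since $F$ is $n$-bare, it has at least $n$ roots and at most $q$ roots, so the number of leaves of $F$ \dots\ is bounded'' is false. Being $n$-bare only forces the first $n$ trees of $F$ to be trivial; the remaining trees $F_{n+1},\dots,F_m$ can be arbitrary finite rooted binary trees, so $F$ (and hence $T$) can have arbitrarily many leaves even with $h=m\le q$ fixed. Moreover you cannot repair this by choosing a reduced representative or using the $PB_m$-coset: the right $PB_m$-action only modifies the braid (it leaves $T$ and $F$ unchanged), and reductions need cooperating carets in $T$, $p$, and $F$, which need not exist. So there really are infinitely many ``shape pairs'' $(T,F)$ among representatives of $0$-cubes in $X(n)^{h\le q}$, and your element $[T,p(p')^{-1},T]$, which only identifies $0$-cubes with the \emph{same} shape pair, does not by itself give finitely many orbits.

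The fix is to strengthen the transitivity statement rather than trying to bound shapes. This is what the paper does: given \emph{any} two $0$-cubes $[T,p,F]_{PB}$ and $[U,q,E]_{PB}$ of $X(n)$ with the same $h$-value, the element $[T,p,F][E,q^{-1},U]\in\Fbr$ carries the second to the first, and one checks (using that $F,E$ are $n$-bare and $T,U$ are $n$-deferred, so the common expansion needed to form the product avoids the first $n$ leaves) that this element actually lies in $\Fbr(n)$. Thus there is exactly one $\Fbr(n)$-orbit of $0$-cubes at each height, hence finitely many in $X(n)^{h\le q}$. Your element $[T,p(p')^{-1},T]$ is the special case $U=T$, $E=F$; you just need the general version.
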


\begin{proof}
First note that if $[T,p,F]_{PB}$ and $[U,q,E]_{PB}$ are any two $0$-cubes of $X(n)$ such that $F$ and $E$ have the same number of roots, then the element $[T,p,F][E,q,U]$ of $\Fbr$ takes $[U,q,E]_{PB}$ to $[T,p,F]_{PB}$. It is straightforward to see that this element lies in $\Fbr(n)$. We conclude that the action of $\Fbr(n)$ on $X(n)^{h\le q}$ has finitely many orbits of $0$-cubes. Finally, observe that for a $0$-cube $x$ of $X$ there are only finitely many cubes in $X$ with $x$ as their bottom $0$-face, so indeed the action of $\Fbr(n)$ on $X(n)^{h\le q}$ is cocompact.
\end{proof}

\begin{lemma}\label{lem:Fbrn_stabs}
Every stabilizer in $\Fbr(n)$ of a cube in $X(n)$ is isomorphic to a pure braid group, and hence is of type $\F_\infty$.
\end{lemma}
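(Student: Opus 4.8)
The plan is to compute the stabilizer in $\Fbr(n)$ of an arbitrary cube $C$ of $X(n)$ directly, by first reducing to the case of a $0$-cube (since the stabilizer of a cube is contained in the stabilizer of its bottom $0$-face, and fixing the bottom $0$-face together with the cube's combinatorial structure amounts to fixing finitely many additional data that only cut the stabilizer down to a finite-index-in-a-pure-braid-group subgroup — in fact, to another pure braid group). So first I would observe that, as in the proof of Lemma~\ref{lem:Fbrn_cocpt}, an element of $\Fbr(n)$ fixing a $0$-cube $x=[T,p,F]_{PB}$ must be of the form $[T,p,F][F,q,F'][T,p,F]^{-1}$ type element; more precisely, $g.x=x$ forces $g$ to be expressible, after suitable expansions, as conjugation by $[T,p,F]$ applied to an element of the form $[1_k,r,1_k]_{PB}$-stabilizer, i.e. the stabilizer of $x$ is $[T,p,F]\,\Stab(\bar 1_k)\,[T,p,F]^{-1}$ where $\bar 1_k$ is the ``root'' $0$-cube $[1_k,1,1_k]_{PB}$ and $k=h(x)$.

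The key step is then to identify $\Stab_{\Fbr(n)}([1_k,1,1_k]_{PB})$ explicitly. An element of $\Fbr$ fixing this $0$-cube (before intersecting with $\Fbr(n)$) must, after expansions, be of the form $[1_k,r,1_k]$ for $r\in PB_k$, since fixing the coset $[1_k,1,1_k]_{PB}$ means the result differs from $[1_k,1,1_k]$ only by a pure braid on the $k$ feet, and the tree parts must agree; conversely every such element fixes it. Thus $\Stab_\Fbr([1_k,1,1_k]_{PB})\cong PB_k$. Now intersecting with $\Fbr(n)$: an element $[1_k,r,1_k]$ lies in $\Fbr(n)$ iff it has an $n$-deferred representative, and the point is that $1_k$ itself is not $n$-deferred (a trivial forest with $k$ roots is not a single $n$-deferred tree), so we must expand; but expanding $[1_k,r,1_k]$ to get $n$-deferred trees on both sides is always possible without touching the first $n$ strands, and this is permitted for \emph{every} $r\in PB_k$. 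Hence the intersection is all of $PB_k$, i.e. $\Stab_{\Fbr(n)}([1_k,1,1_k]_{PB})\cong PB_k$. Conjugating back, $\Stab_{\Fbr(n)}(x)\cong PB_k$ as well, and for a general cube $C$ with bottom $x$ and dimension $d$, the stabilizer of $C$ inside $\Stab_{\Fbr(n)}(x)\cong PB_{k}$ is the subgroup preserving the partition of the $k$ feet into the $d$ pairs being merged and the $k-2d$ singletons; this is again (isomorphic to) a pure braid group, namely $PB$ on the $k-d$ ``blocks'' — one checks that braiding that respects such a block decomposition and is pure is exactly a pure braid on the blocks with each block rigidly bundled. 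Since pure braid groups are of type $\F_\infty$ (being iterated extensions of free groups, as recalled in Section~\ref{sec:gps}), the lemma follows.

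I expect the main obstacle to be the bookkeeping in the general-cube case: verifying carefully that the subgroup of $PB_k$ stabilizing a $d$-cube (equivalently, respecting a partition of $k$ feet into $d$ distinguished pairs and $k-2d$ fixed points, where pairs may be merged) is genuinely isomorphic to a pure braid group rather than merely a finite-index subgroup of one. The cleanest way is to argue that merging a pair of adjacent parallel strands is a bijection between pure braids on $k$ strands that keep those two strands parallel and pure braids on $k-1$ strands, and to iterate; combined with the fact that any foot-permutation induced must be trivial because everything in sight is pure, this pins down the isomorphism type. The reduction from arbitrary cube to $0$-cube, and the identification of the $0$-cube stabilizer, are by contrast routine given Lemma~\ref{lem:Xn_stable} and the groupoid-coset formalism already set up.
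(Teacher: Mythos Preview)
Your treatment of $0$-cubes has a notational slip: the object $[1_k,1,1_k]_{PB}$ is not a $0$-cube of $X$ for $k>1$, since by definition $0$-cubes are classes $[T,p,F]_{PB}$ with $T$ a \emph{tree}. There is no need for a ``root'' $0$-cube or groupoid conjugation at all. Working directly with $x=[T,p,F]_{PB}$ where $T$ is chosen $n$-deferred, one has $\Stab_{\Fbr}(x)=\{[T,p,F][1,q,1][F,p^{-1},T]\mid q\in PB_m\}$, and each such element rewrites as $[T,pq'p^{-1},T]$ for a suitable pure $q'$; since $T$ is $n$-deferred this visibly lies in $\Fbr(n)$. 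Thus $\Stab_{\Fbr(n)}(x)=\Stab_{\Fbr}(x)\cong PB_m$. This is exactly the paper's argument, and it sidesteps your expansion discussion entirely.

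For positive-dimensional cubes your argument is both more complicated than necessary and contains an error. First a terminology issue: if $x$ is the \emph{bottom} of a cube then the other $0$-faces are obtained by \emph{splitting} feet of $x$, not by merging pairs, so your ``$d$ pairs being merged'' description already signals a top/bottom confusion. More substantively, your claim that $\Stab(C)$ is the block-respecting subgroup $\cong PB_{k-d}$ inside $\Stab(x)\cong PB_k$ is false: in fact $\Stab_{\Fbr}(C)=\Stab_{\Fbr}(x)$ on the nose. The point you are missing is that the $0$-faces are $PB$-cosets. If $g$ fixes the bottom $x$, so $g[T,p,F]=[T,p,F][1,q,1]$ with $q$ pure, then for any subforest $E'$ one computes $g\cdot x[E',1,1]_{PB}=x[1,q,1][E',1,1]_{PB}=x[E',q',1]_{PB}=x[E',1,1]_{PB}$, the last equality because $q'$ is pure and is absorbed by the coset. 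Hence every element of $\Stab_{\Fbr}(x)$ already fixes the entire cube. The paper handles this in one line by citing the pure analogue of \cite[Corollary~2.8]{bux16}. Your final conclusion (some pure braid group) is of course still correct, but the route is circuitous and the specific group you name is wrong.
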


\begin{proof}
For $x=[T,p,F]_{PB}$ a $0$-cube in $X(n)$, say with $h(x)=m$, the (pure analog of the) proof of \cite[Lemma~2.6]{bux16} says that
\[
\Stab_{\Fbr}(x)=\{[T,p,F][1,q,1][F,p^{-1},T]\mid q\in PB_m\} \cong PB_m\text{.}
\]
Note that $[T,p,F][1,q,1][F,p^{-1},T] = [T,pq'p^{-1},T]$ for some $q'$, so if we assume $T$ is $n$-deferred (which is possible since $[T,p,F]_{PB}\in X(n)$) then we conclude that $\Stab_{\Fbr}(x) \le \Fbr(n)$. In particular the stabilizer of $x$ in $\Fbr(n)$ equals the stabilizer of $x$ in $\Fbr$, so is isomorphic to $PB_m$. For positive dimensional cubes, we note that a consequence of the pure version of \cite[Corollary~2.8]{bux16} is that the stabilizer in $\Fbr$ of a cube equals the stabilizer in $\Fbr$ of its bottom $0$-face.
\end{proof}

Looking at the action of $\Fbr(n)$ on $X(n)$, the last things to prove to get $\Fbr(n)$ to be of type $\F_\infty$ are that descending links in $X(n)$ are increasingly highly connected, and $X(n)$ is contractible. These are the goals of the next section.

\section{Arc complexes and descending links in $X$}\label{sec:arc}

Let $S$ be a compact connected oriented surface with (possibly empty) boundary $\partial S$, and $P$ a finite set of marked points in $S\setminus \partial S$. For our purposes an \emph{arc} is a simple path in $S\setminus \partial S$ that meets $P$ precisely at its endpoints. If the endpoints of an arc coincide then we additionally require it to not bound a disk in $S\setminus P$.

Often in the literature, instead of specifying a set of marked points as the candidates for the endpoints of arcs, the boundary $\partial S$ serves as the set of such candidates. One can pass between these viewpoints by replacing marked points with boundary components, and as explained in \cite[Section~5.3.1]{farb12} from the point of view of the arc complex, defined below, there is no difference between these viewpoints (more precisely they yield isomorphic notions of arc complex). Here we will use marked points, as was done in \cite{bux16}, since later we will be identifying them with vertices of graphs. Note however that our surface $S$ does still have boundary $\partial S$, which arcs are not permitted to touch.

\subsection{The arc complex and variations}\label{sec:arc_cpx}

\begin{definition}[Arc system/complex]
An \emph{arc system} is a set of arcs $\{\alpha_0,\dots,\alpha_k\}$ such that for each $i\ne j$, the arcs $\alpha_i$ and $\alpha_j$ are disjoint except possibly at their endpoints and represent distinct homotopy classes relative $P$. The homotopy classes relative $P$ of arc systems form the simplices of a simplicial complex called the \emph{arc complex (on $(S,P)$)}, denoted $\arcs$. The face relation for simplices is given by passing to subsets of representative arc systems.
\end{definition}

If $\{\alpha,\beta\}$ represents a $1$-simplex in $\arcs$ call the arcs $\alpha$ and $\beta$ \emph{compatible}.

\begin{definition}[Modeled on $\Gamma$]\label{def:modeled}
Let $\Gamma$ be a graph with $|P|$ vertices and fix an identification $V(\Gamma)\leftrightarrow P$. We say an arc $\alpha$ is \emph{modeled on $\Gamma$} if the vertices of $\Gamma$ corresponding to the endpoints of $\alpha$ are the endpoints of an edge in $\Gamma$. Write $\arcs(\Gamma)$ for the subcomplex of $\arcs$ consisting of arc systems whose arcs are all modeled on $\Gamma$.
\end{definition}

Note that if $\alpha$ is modeled on $\Gamma$ and the endpoints of $\alpha$ coincide, say at $p\in P$, then the vertex of $\Gamma$ corresponding to $p$ must have a loop at it.

\begin{definition}[Matching arc complex]
An arc system $\{\alpha_0,\dots,\alpha_k\}$ is called a \emph{matching arc system} if for each $i$ the arc $\alpha_i$ has distinct endpoints and for each $i\ne j$ the arcs $\alpha_i$ and $\alpha_j$ are disjoint including at their endpoints. The \emph{complete matching arc complex (on $(S,P)$)} $\matcharc$ is the subcomplex of $\arcs$ consisting of homotopy classes of matching arc systems. The \emph{matching arc complex (on $(S,P)$) modeled on $\Gamma$} is $\matcharc(\Gamma)\defeq \matcharc \cap \arcs(\Gamma)$.
\end{definition}

If $\{\alpha,\beta\}$ represents a $1$-simplex in $\matcharc$ call the arcs $\alpha$ and $\beta$ \emph{matching-compatible}; see Figure~\ref{fig:arcs}.

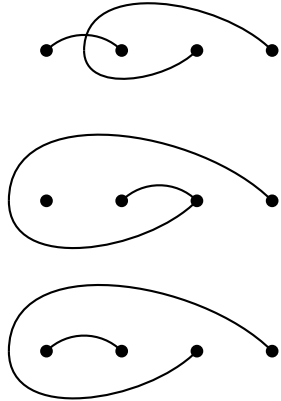
\begin{figure}[hbt]
 \begin{tikzpicture}[line width=0.8pt]
  \draw (1,0) to [out=45, in=135] (2,0);
	\draw (3,0) to [out=225, in=-90] (1.5,0); \draw (1.5,0) to [out=90, in=135] (4,0);
	\filldraw (1,0) circle (2pt)   (2,0) circle (2pt)   (3,0) circle (2pt)   (4,0) circle (2pt);
	
	\begin{scope}[yshift=-2cm]
	\draw (2,0) to [out=45, in=135] (3,0);
	\draw (3,0) to [out=225, in=-90] (0.5,0); \draw (0.5,0) to [out=90, in=135] (4,0);
	\filldraw (1,0) circle (2pt)   (2,0) circle (2pt)   (3,0) circle (2pt)   (4,0) circle (2pt);
	\end{scope}
	
	\begin{scope}[yshift=-4cm]
	\draw (1,0) to [out=45, in=135] (2,0);
	\draw (3,0) to [out=225, in=-90] (0.5,0); \draw (0.5,0) to [out=90, in=135] (4,0);
	\filldraw (1,0) circle (2pt)   (2,0) circle (2pt)   (3,0) circle (2pt)   (4,0) circle (2pt);
	\end{scope}
	
 \end{tikzpicture}
 \caption{From top to bottom, a pair of non-compatible arcs, a pair of compatible but not matching-compatible arcs, and a pair of matching-compatible arcs.}\label{fig:arcs}
\end{figure}

The terminology of matchings was inspired by the analogous notion for collections of edges in a graph, which form a \emph{matching} when they are pairwise disjoint. Note that $\matcharc=\matcharc(K_{|P|})$ for $K_{|P|}$ the complete graph on $|P|$ vertices, hence the name. In general $\matcharc(\Gamma)=\matcharc(\Gamma')$ where $\Gamma'$ is $\Gamma$ with any loops removed.

Call a graph \emph{linear} if its vertices can be identified with $\{1,\dots,n\}$ and its edges with $\{\{i,i+1\}\mid 1\le i\le n-1\}$ for some $n$.

\begin{cit}\cite[Theorem~3.10]{bux16}\label{cit:matcharc_conn}
If $\Gamma$ is a subgraph of a linear graph, then $\matcharc(\Gamma)$ is $\left(\left\lfloor\frac{|E(\Gamma)|-1}{4}\right\rfloor-1\right)$-connected.
\end{cit}

\subsection{Descending links}\label{sec:dlks}

The connection between matching arc complexes and descending links of $0$-cubes in $X$ is as follows. Let $x=[T,p,F]_{PB}$ be a $0$-cube in $X$ with $h(x)=m$. The simplices in $\dlk x$ are in one to one correspondence with the positive dimensional cubes in $X$ having $x$ as their top. Each such cube is uniquely determined by its bottom $0$-cube, which is necessarily of the form $[T,p,F][1,q,E]_{PB}$ for some $q\in PB_m$ and some elementary forest $E$ with $m$ leaves. We will notationally represent this simplex of $\dlk x$ by $[1,q,E]_{PB}$. If the simplex is $k$-dimensional then $E$ has $k+1$ non-trivial trees. Now let $\Gamma_m$ be the linear graph with $m$ vertices. Let $S$ be a disk and $P$ a set of $m$ points in $S$. Fix an embedding of $\Gamma_m$ into $S$, which in particular gives us an identification of $V(\Gamma_m)$ with $P$ as in Definition~\ref{def:modeled}. Now matchings of $\Gamma_m$ (meaning collections of disjoint edges) can be viewed as simplices in $\matcharc(\Gamma_m)$. Note that $PB_m$ acts on $\matcharc(\Gamma_m)$ since it is the pure mapping class group of the disk with $m$ marked points. As discussed in \cite[Section~4]{bux16}, there is a surjective simplicial map $\pi\colon \dlk x \to \matcharc(\Gamma_m)$ given by
\[
\pi \colon [1,p,E]_{PB} \mapsto p(\Gamma_E)\text{,}
\]
where $\Gamma_E$ is the matching in $\Gamma_m$ whose edges are those pairs $\{i,i+1\}$ such that the $i$th and $(i+1)$st leaves of $E$ share a caret. (Again, we view $\Gamma_E$ as both a matching in $\Gamma_m$ and a matching arc system.) This map is not injective, but the fiber of any $k$-simplex is $(k-1)$-connected. Links in $\matcharc(\Gamma_m)$ are similarly highly connected, so \cite[Theorem~9.1]{quillen78} says that $\dlk x$ is at least as highly connected as $\matcharc(\Gamma_m)$, and hence Citation~\ref{cit:matcharc_conn} tells us that as $h(x)$ goes to infinity, $\dlk x$ becomes arbitrarily highly connected.

Working through an analogous argument for $X(n)$, we get the following:

\begin{proposition}\label{prop:Xn_dlk}
Let $x$ be a $0$-cube in $X(n)$, say with $h(x)=m$ (so $n+1\le m$). The descending link of $x$ in $X(n)$ is $\left(\left\lfloor\frac{m-n-2}{4}\right\rfloor-1\right)$-connected.
\end{proposition}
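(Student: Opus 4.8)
The plan is to mimic the argument sketched for the full complex $X$ in the discussion preceding the proposition, adapting it to the subcomplex $X(n)$. First I would fix a $0$-cube $x=[T,p,F]_{PB}$ in $X(n)$ with $h(x)=m$, so that $T$ is $n$-deferred and $F$ is $n$-bare. As in the case of $X$, the simplices of the descending link $\dlk_{X(n)} x$ correspond to positive-dimensional cubes of $X(n)$ having $x$ as their top, and each such cube is determined by its bottom $0$-cube $[T,p,F][1,q,E]_{PB}$ for $q\in PB_m$ and $E$ an elementary forest with $m$ leaves. The key difference is a constraint on which elementary forests $E$ are allowed: since the resulting $0$-cube must lie in $X(n)$, i.e., its forest part must be $n$-bare, we may only merge among the last $m-n$ feet of $F$. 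Concretely, writing the bottom $0$-cube as $[T,p,F][1,q,E]_{PB}$ and pushing the merges through, the carets of $E$ must not involve any of the first $n$ roots; equivalently, only pairs $\{i,i+1\}$ with $i\ge n+1$ may be merged.

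The second step is to identify the target of the projection map. Just as in the $X$ case one gets a surjective simplicial map $\pi\colon \dlk_{X(n)} x \to \matcharc(\Gamma)$ where now $\Gamma$ is the relevant subgraph of the linear graph $\Gamma_m$; here $\Gamma$ is the linear graph on the vertices $\{n+1,\dots,m\}$, which has $m-n$ vertices and $m-n-1$ edges. The map sends $[1,q,E]_{PB}$ to $q(\Gamma_E)$, and one must note that $PB_m$ still acts on $\matcharc(\Gamma_m)$ and that the image lands in $\matcharc(\Gamma)$ because $\Gamma_E\subseteq\Gamma$ by the constraint just described. The fiber-connectivity analysis is identical to the one in \cite[Section~4]{bux16}: the fiber of a $k$-simplex is $(k-1)$-connected, and links in $\matcharc(\Gamma)$ are correspondingly highly connected, so Quillen's fiber lemma \cite[Theorem~9.1]{quillen78} gives that $\dlk_{X(n)} x$ is at least as highly connected as $\matcharc(\Gamma)$.

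The final step is to quote Citation~\ref{cit:matcharc_conn}: since $\Gamma$ is a subgraph of a linear graph with $|E(\Gamma)|=m-n-1$ edges, $\matcharc(\Gamma)$ is $\left(\left\lfloor\frac{(m-n-1)-1}{4}\right\rfloor-1\right)$-connected, i.e., $\left(\left\lfloor\frac{m-n-2}{4}\right\rfloor-1\right)$-connected, which is exactly the claimed bound.

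I expect the main obstacle to be the bookkeeping in the first step: verifying carefully that the condition ``the bottom $0$-cube lies in $X(n)$'' translates exactly into ``$E$ has no carets among its first $n$ roots,'' and hence that $\dlk_{X(n)} x$ is precisely the subcomplex of $\dlk_X x$ corresponding to such $E$, with no further subtleties arising from the $PB_m$-quotient or from the $n$-deferred condition on $T$ (the latter plays no role once $x$ is fixed, since descending merges only modify the forest side). One should also check that the fiber-connectivity argument of \cite{bux16} goes through verbatim with $\Gamma$ in place of $\Gamma_m$, which it does since that argument is local and only uses that $\Gamma$ is a subgraph of a linear graph. Everything else is a direct transcription of the already-established machinery.
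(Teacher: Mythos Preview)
Your proposal is correct and follows essentially the same approach as the paper: restrict the projection $\pi$ to $\dlk_{X(n)}x$, identify the image as a matching arc complex on a subgraph of $\Gamma_m$ with $m-n-1$ edges, check fibers and links have the right connectivity, and apply Quillen's theorem together with Citation~\ref{cit:matcharc_conn}. One small point to clean up: the target graph should keep all $m$ vertices (with the first $n$ isolated), as in the paper's $\Gamma_m(n)$, rather than only the vertices $\{n+1,\dots,m\}$---otherwise $\matcharc(\Gamma)$ lives on a surface with $m-n$ marked points and $PB_m$ does not act, so $\pi$ cannot land there; this does not affect the edge count or the connectivity bound.
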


\begin{proof}
Let $\Gamma_m(n)$ be the subgraph of $\Gamma_m$ with the same vertex set as $\Gamma_m$ and whose edges are given by the pairs $\{i,i+1\}$ with $i\ge n+1$. Since $\Gamma_m(n)$ has $m-n-1$ edges, Citation~\ref{cit:matcharc_conn} says $\matcharc(\Gamma_m(n))$ is $\left(\left\lfloor\frac{m-n-2}{4}\right\rfloor-1\right)$-connected. The map $\pi \colon \dlk_X x \to \matcharc(\Gamma_m)$ above restricts to a surjective simplicial map $\pi' \colon \dlk_{X(n)}x \to \matcharc(\Gamma_m(n))$. We want to apply \cite[Theorem~9.1]{quillen78}, so we need to check high connectivity of fibers and links. For any $k$-simplex $\sigma$ in $\matcharc(\Gamma_m(n))$, the fiber $(\pi')^{-1}(\sigma)$ equals the full fiber $\pi^{-1}(\sigma)$, hence is $(k-1)$-connected by \cite[Proposition~4.3]{bux16}. The link of $\sigma$ in $\matcharc(\Gamma_m(n))$ is isomorphic to $\matcharc(\Delta)$ for some subgraph $\Delta$ of $\Gamma_m(n)$ with at least $m-n-1-3(k+1)$ edges, so is $\left(\left\lfloor\frac{m-n-2-3(k+1)}{4}\right\rfloor-1\right)$-connected by Citation~\ref{cit:matcharc_conn}, and hence $\left(\left\lfloor\frac{m-n-2}{4}\right\rfloor-k-2\right)$-connected. Now \cite[Theorem~9.1]{quillen78} gives us our conclusion.
\end{proof}

To prove that $X(n)$ is contractible we will prove that it is $\CAT(0)$. First we will prove that $X$ is $\CAT(0)$, which should not be surprising to the experts, but has not been stated or proved in the literature before. For the classical Thompson groups, the $\CAT(0)$ property for the Stein--Farley complexes was one of the main results of \cite{farley03}.

\begin{theorem}\label{thrm:X_CAT0}
The Stein--Farley complex $X$ is $\CAT(0)$.
\end{theorem}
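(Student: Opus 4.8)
The plan is to invoke the characterization of $\CAT(0)$ cube complexes from Definition~\ref{def:cat0}: since $X$ is already known to be contractible (in particular simply connected), it suffices to show that the link of every $0$-cube is a flag complex. So the whole proof reduces to a local combinatorial check on links.

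First I would fix a $0$-cube $x=[T,p,F]_{PB}$ with $h(x)=m$ and identify the vertices of $\lk x$. A vertex of $\lk x$ is a direction into a $1$-cube containing $x$, and each $1$-cube at $x$ either has $x$ as its bottom or as its top. The ones with $x$ as bottom correspond to adding a single caret to one of the $m$ feet of $x$ (giving an ``up'' vertex for each of the $m$ feet), and by Lemma~\ref{lem:alk_cible} these $m$ up-directions span a single $(m-1)$-simplex, so the ``ascending part'' of the link is automatically flag. The ones with $x$ as top correspond to the simplices of $\dlk x$, i.e.\ to matchings in $\Gamma_m$ up to the $PB_m$-action as described in Section~\ref{sec:dlks}. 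The key point is then to understand which collections of these link-vertices span simplices, and to see that $\lk x$ decomposes as a join of the ascending $(m-1)$-simplex with $\dlk x$: any up-direction and any down-direction span an edge in $\lk x$ (the corresponding caret-addition at a foot and caret-removal/merge commute, giving a square in $X$), so $\lk x$ is the simplicial join $(\alk x) * (\dlk x)$. Since a join of simplicial complexes is flag if and only if both factors are flag, and $\alk x$ is a simplex (hence flag), it remains only to show $\dlk x$ is flag.

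To show $\dlk x$ is flag, I would argue that a set of vertices of $\dlk x$ that is pairwise joined by edges actually spans a simplex. A vertex of $\dlk x$ is represented by $[1,q,E]_{PB}$ with $E$ a single caret on adjacent feet $i,i+1$ (after applying some braiding $q$); geometrically it is an isotopy class of a matching arc on the disk with $m$ marked points. Two such vertices span an edge in $\dlk x$ exactly when the corresponding arcs are matching-compatible, i.e.\ can be realized disjointly (including endpoints), because then the two merges can be performed simultaneously, producing a $2$-cube in $X$ with top $x$. The flagness claim is then: if arcs $\alpha_0,\dots,\alpha_k$ are pairwise matching-compatible, then they can be simultaneously realized as a matching arc system, hence span a $k$-simplex. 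This is a standard ``realizing a clique of pairwise-disjoint arcs simultaneously disjoint'' fact for arcs on surfaces — one can use the surgery/innermost-disk argument: given pairwise-disjoint realizations, iteratively remove intersections created between non-adjacent pairs by innermost bigon surgeries, which does not disturb the disjointness already achieved. Concretely on the disk this is elementary, but it is exactly the content one needs.

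The main obstacle I expect is the last step — verifying carefully that pairwise matching-compatibility of a family of arcs (equivalently, of a family of down-directions at $x$) forces joint realizability, including the bookkeeping of the $PB_m$-cosets so that the ``top equals $x$'' condition is genuinely satisfied for the whole $(k+1)$-caret forest and not just pairwise. One has to make sure that the element $q\in PB_m$ witnessing compatibility can be chosen uniformly for all pairs simultaneously; this is where the passage to homotopy/isotopy classes of arcs and the standard change-of-coordinates principle for the mapping class group of the punctured disk does the work. Once joint realizability is in hand, one gets a single elementary forest $E$ with $k+1$ carets and an element $q$ with $[1,q,E]_{PB}$ a $k$-simplex of $\dlk x$ having the given vertices as faces, so $\dlk x$ is flag, hence $\lk x = (\alk x)*(\dlk x)$ is flag, and Definition~\ref{def:cat0} gives that $X$ is $\CAT(0)$.
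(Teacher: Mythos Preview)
Your overall strategy --- reduce to showing every link is flag --- is correct, and your argument that $\dlk x$ is flag is essentially right (you correctly identify as the crux the problem of choosing a single braid $q$ witnessing all the merges simultaneously; this is exactly what the paper handles by invoking \cite[Lemma~4.2]{bux16}).  However, the claim that $\lk x=(\alk x)*(\dlk x)$ is a simplicial join is \emph{false}, and this is a genuine gap.  Not every up-direction is adjacent to every down-direction.  Concretely, if the up-direction splits foot $i$ (so the splitting forest $D$ has root support $\{i\}$) and the down-direction is $[1,q,E]_{PB}$ with $E$ merging feet $j,j+1$ (leaf support $\{j,j+1\}$), then when $i\in\{j,j+1\}$ these two $1$-cubes do \emph{not} bound a square in $X$: from the bottom vertex $y$ (with $m-1$ feet) one would need an elementary $2$-forest to reach the top vertex $z$ (with $m+1$ feet), but foot $i$ of $x$ lies in the root of $y$ already carrying the caret of $E$, so reaching $z$ would require stacking two carets on one root, which is not elementary.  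Thus the adjacency of an up- and a down-vertex is equivalent to the disjointness of the relevant root and leaf supports, and the link is strictly smaller than the join.

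The fix is exactly what the paper does.  Given pairwise-adjacent vertices $y_0,\dots,y_k\in\dlk x$ and $z_{k+1},\dots,z_\ell\in\alk x$, first use flagness of the two pieces to assemble a single merge $[1,q,E]$ and a single split $[D,1,1]$.  The pairwise adjacency of each $y_i$ with each $z_j$ forces the leaf support of $E$ to be disjoint from the root support of $D$; this disjointness is precisely what makes $[E,1,1][D,1,1]=[C,1,1]$ for a single \emph{elementary} forest $C$, and then the cube determined by bottom $y=x[1,q,E]_{PB}$ and forest $C$ contains $x$, the $y_i$, and the $z_j$, giving the desired simplex in $\lk x$.  So replace the join shortcut with this disjoint-support argument and the proof goes through.
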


\begin{proof}
Since $X$ is simply connected (in fact contractible), we just need to check that links of $0$-cubes are flag complexes. Let $x$ be a $0$-cube, say with $h(x)=m$. First we claim that the descending link $\dlk x$ is flag. Note that $\matcharc(\Gamma_m)$ is flag by the proof of \cite[Lemma~3.2]{bux16}. Now take a collection of $0$-simplices $y_0,\dots,y_k$ in $\dlk x$ that pairwise span $1$-simplices, so their images under $\pi$ in $\matcharc(\Gamma_m)$ pairwise span $1$-simplices and hence span a simplex. We can inductively assume that $y_1,\dots,y_k$ span a simplex, and so the pure version of \cite[Lemma~4.2]{bux16} implies that $y_0,\dots,y_k$ span a simplex, as desired. This shows that $\dlk x$ is flag. Note also that, by Lemma~\ref{lem:alk_cible}, the ascending link $\alk x$ is a simplex, hence is flag.

Now consider the link $\lk x$, and take a collection of $0$-simplices in $\lk x$ that pairwise span $1$-simplices. Let the collection be $y_0,\dots,y_k,z_{k+1},\dots,z_\ell$, with all $y_i\in \dlk x$ and all $z_i\in \alk x$ (these are the only options for $0$-simplices of $\lk x$). By abuse of notation also write $y_i$ for the $0$-cube in $X$ that shares a $1$-cube with $x$ and corresponds to the direction $y_i$ in $\lk x$ (and similarly for $z_i$). Since $\dlk x$ and $\alk x$ are flag, we can choose $0$-cubes $y$ and $z$ such that there is a cube with top $z$ and bottom $x$ containing each $z_i$, and also a cube with top $x$ and bottom $y$ containing each $y_i$. It suffices to show that there is a cube with top $z$ and bottom $y$ containing $x$, since then by the construction of $X$ it will have these as faces.

Write $x=[T,p,F]_{PB}$, and choose elementary forests $E_0,\dots,E_k,D_{k+1},\dots,D_\ell$ and pure braids $q_0,\dots,q_k$ such that $y_i=[T,p,F][1,q_i,E_i]_{PB}$ and $z_i=[T,p,F][D_i,1,1]_{PB}$ for all $i$. For an elementary forest $E$ number the roots and the leaves from left to right, define the \emph{root support} of $E$ to be $\{i\mid$ the $i$th root of $E$ belongs to a non-trivial tree$\}$ and the \emph{leaf support} of $E$ to be $\{i\mid$ the $i$th leaf of $E$ belongs to a non-trivial tree$\}$. The fact that $y_0,\dots,y_k,z_{k+1},\dots,z_\ell$ pairwise span $1$-simplices in $\lk x$ implies that the leaf supports of the $E_i$ are pairwise disjoint, as are the root supports of the $D_i$, and each leaf support of an $E_i$ is disjoint from every root support of a $D_j$. The $0$-cube $y$ is of the form $[T,p,F][1,q,E]_{PB}$ for some $q$ and some $E$ whose leaf support equals the union of the leaf supports of the $E_i$. The $0$-cube $z$ is of the form $[T,p,F][D,1,1]{PB}$ for some $D$ whose root support equals the union of the root supports of the $D_i$. In particular the leaf support of $E$ is disjoint from the root support of $D$. This tells us that the product $[E,1,1][D,1,1]$ equals $[C,1,1]$ for $C$ an elementary forest. The equation
\begin{align*}
y[C,1,1]_{PB} &= [T,p,F][1,q,E][C,1,1]_{PB} \\
&= [T,p,F][1,q,E][E,1,1][D,1,1]_{PB} \\
&= [T,p,F][D,1,1]_{PB} \\
&= z
\end{align*}
thus finishes the proof.
\end{proof}

\begin{corollary}\label{cor:Xn_cible}
$X(n)$ is $\CAT(0)$ and hence contractible.
\end{corollary}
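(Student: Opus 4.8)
The plan is to show that $X(n)$ is a combinatorially convex subcomplex of $X$, and then invoke Citation~\ref{cit:loc_cvx} together with Theorem~\ref{thrm:X_CAT0}. Since $X$ is $\CAT(0)$ (Theorem~\ref{thrm:X_CAT0}) and $X(n)$ is a full subcomplex of $X$ by definition, the only thing left to verify in the definition of combinatorially convex is that $X(n)$ is connected and that for each $0$-cube $y$ of $X(n)$, the link $\lk_{X(n)} y$ is a full subcomplex of $\lk_X y$. Once that is established, Citation~\ref{cit:loc_cvx} immediately gives that $X(n)$ is $\CAT(0)$, hence contractible.

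For connectedness, I would argue as in Lemma~\ref{lem:Fbrn_cocpt}: given any two $0$-cubes of $X(n)$, one can connect them through $0$-cubes of the form $[T,p,F]_{PB}$ with $T$ $n$-deferred and $F$ $n$-bare, using the fact that $X(n)$ contains cubes corresponding to adding splits on the non-bare feet (keeping the first $n$ feet untouched) and merges back down; more simply, $\Fbr(n)$ acts on $X(n)$ (Lemma~\ref{lem:Xn_stable}) with finitely many orbits of cells in each sublevel set (Lemma~\ref{lem:Fbrn_cocpt}), and one checks that the orbit of a single vertex generates everything under the local moves available in $X(n)$. The cleanest route is to note that any $0$-cube $[T,p,F]_{PB}$ of $X(n)$ is connected within $X(n)$ to one of the form $[T',p',1_N]_{PB}$ with $T'$ $n$-deferred (by adding elementary splits to the non-trivial feet of $F$, which are exactly the feet numbered $> n$, thereby staying $n$-bare and not affecting $n$-deferredness), and any two such are connected through further splits.

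The real content is the fullness of links. Let $y=[T,p,F]_{PB}$ be a $0$-cube of $X(n)$, so $T$ is $n$-deferred and $F$ is $n$-bare. A $0$-simplex of $\lk_X y$ is either descending (a direction into a cube with $y$ on top, represented by some $[1,q,E]_{PB}$ with $E$ elementary with $m=h(y)$ leaves) or ascending (a direction into a cube with $y$ on the bottom, represented by adding an elementary split to a foot of $F$). I would check that a descending direction lies in $X(n)$ exactly when the corresponding merge never merges two of the first $n$ roots of the underlying tree — equivalently, the matching edge stays among the pairs $\{i,i+1\}$ with $i \ge n+1$ after applying $q$ — and an ascending direction lies in $X(n)$ exactly when the split is added to one of the feet numbered $>n$ (so $F$ stays $n$-bare). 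Then, given a collection of $0$-simplices of $\lk_X y$ that pairwise span edges and all individually lie in $\lk_{X(n)} y$, I must show the simplex they span in $\lk_X y$ also lies in $X(n)$. But the simplex they span corresponds (via the analysis in the proof of Theorem~\ref{thrm:X_CAT0}) to the cube with top $z$ and bottom $y'$ where $z$ is built from the union of the relevant root supports of the descending directions and $y'$ from the union of the leaf supports of the ascending directions; since each individual direction keeps the first $n$ roots/feet ``bare'' (untouched by merges or by new splits), so does the union, and the full cube therefore still has all its $0$-faces in $X(n)$. Thus $\lk_{X(n)} y$ is full in $\lk_X y$.

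The main obstacle I expect is purely bookkeeping: carefully identifying, in the $PB_m$-coset language, which descending directions at a vertex of $X(n)$ actually stay inside $X(n)$, since the braid $p$ (and the representative-dependent reshuffling) can reorder strands — one has to be careful that ``$n$-deferred'' and ``$n$-bare'' are conditions on a suitable representative, not on every representative (compare the cautionary remarks after the definition of $\Fbr(\ty{w})$ and in Lemma~\ref{lem:Fbrn_stabs}). Once one fixes an $n$-deferred, $n$-bare representative of $y$ and tracks supports relative to it, the disjointness-of-supports argument from the proof of Theorem~\ref{thrm:X_CAT0} goes through essentially verbatim, with the extra observation that none of the supports involved meet $\{1,\dots,n\}$. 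So the corollary reduces to: $X(n)$ connected $+$ links full $\Rightarrow$ combinatorially convex $\Rightarrow$ $\CAT(0)$ by Citation~\ref{cit:loc_cvx}, hence contractible.
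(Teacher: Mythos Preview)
Your proposal is correct and follows essentially the same route as the paper: verify connectedness of $X(n)$, then show $\lk_{X(n)} y$ is full in $\lk_X y$ by observing that all the relevant leaf and root supports from the proof of Theorem~\ref{thrm:X_CAT0} avoid $\{1,\dots,n\}$, and apply Citation~\ref{cit:loc_cvx}. One minor slip worth fixing: in your description of the spanning cube you swapped the roles of ascending and descending directions---the top $z$ is built from the \emph{ascending} splits (root supports of the $D_i$) and the bottom $y'$ from the \emph{descending} merges (leaf supports of the $E_i$)---but this is purely a labeling error and does not affect the argument.
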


\begin{proof}
First note that $X(n)$ is connected, since the $0$-cube $[T,p,F]_{PB}$ in $X(n)$ is connected by a path in $X(n)$ to $[T,1,1]_{PB}$, and any $0$-cubes in $X(n)$ of the form $[T_1,1,1]_{PB},[T_2,1,1]_{PB}$ are connected by a path in $X(n)$. Now it suffices by Citation~\ref{cit:loc_cvx} to prove that for any $0$-cube $x$ in $X(n)$, the link $\lk_{X(n)}x$ of $x$ in $X(n)$ is a full subcomplex of the link $\lk_X x$ of $x$ in $X$. Let $y_0,\dots,y_k,z_{k+1},\dots,z_\ell$ be a collection of $0$-simplices in $\lk_X x$ that span a simplex in $\lk_X x$, as in the proof of Theorem~\ref{thrm:X_CAT0}, and assume they even lie in $\lk_{X(n)} x$. Then the leaf supports of the $E_i$ and the root supports of the $D_i$ (defined in the proof of Theorem~\ref{thrm:X_CAT0}) must all be disjoint from $\{1,\dots,n\}$. This implies that the leaf support of the forest $E$ and the root supports of the forests $D$ and $C$ (defined in the proof of Theorem~\ref{thrm:X_CAT0}) are all disjoint from $\{1,\dots,n\}$, and hence all the cubes built in the proof of Theorem~\ref{thrm:X_CAT0} actually lie in $X(n)$. We conclude that the simplex spanned by $y_0,\dots,y_k,z_{k+1},\dots,z_\ell$ lies in $\lk_{X(n)} x$, as desired.
\end{proof}

\begin{corollary}\label{cor:Fbrn_Finfty}
For any $n$, $\Fbr(n)$ is of type $\F_\infty$. Hence for any $\ty{w}\in\{\ty{0},\ty{1}\}^*$, $\Fbr(\ty{w})$ is of type $\F_\infty$.
\end{corollary}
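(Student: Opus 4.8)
The plan is to apply Lemma~\ref{lem:brown} to the action of $\Fbr(n)$ on $X(n)$. All the necessary hypotheses have in fact been collected over the course of the previous section, so the proof is essentially an assembly job. First I would verify that $X(n)$ is a contractible affine cell complex: contractibility is exactly Corollary~\ref{cor:Xn_cible} (which depends on Theorem~\ref{thrm:X_CAT0}), and being an affine cell complex is inherited from $X$ since $X(n)$ is a full subcomplex of the cube complex $X$.

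Next I would check the remaining hypotheses of Lemma~\ref{lem:brown} in turn. The group $\Fbr(n)$ acts cellularly on $X(n)$ by Lemma~\ref{lem:Xn_stable}. The Morse function $h$ restricts to $X(n)$, it is $\Fbr(n)$-invariant since the action preserves $h$-values, and the sublevel sets $X(n)^{h\le q}$ are $\Fbr(n)$-cocompact by Lemma~\ref{lem:Fbrn_cocpt}. Every cube stabilizer in $\Fbr(n)$ is isomorphic to a pure braid group, hence of type $\F_\infty$, by Lemma~\ref{lem:Fbrn_stabs}. Finally, the descending link condition: Proposition~\ref{prop:Xn_dlk} says that for a $0$-cube $x$ with $h(x)=m$ the descending link $\dlk_{X(n)} x$ is $\left(\left\lfloor\frac{m-n-2}{4}\right\rfloor-1\right)$-connected, so given $n$ one simply takes $q$ large enough (e.g.\ $q = 4n+4n+\text{something}$, concretely any $q$ with $\lfloor (q-n-1)/4\rfloor \ge n$) that every $0$-cube $x$ with $h(x)>q$ has $\dlk_{X(n)} x$ at least $(n-1)$-connected; more precisely, for each target connectivity $n-1$ choose $q$ so that $h(x)>q$ forces $\left\lfloor\frac{h(x)-n-2}{4}\right\rfloor-1 \ge n-1$. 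Lemma~\ref{lem:brown} then yields that $\Fbr(n)$ is of type $\F_\infty$.

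For the second sentence of the corollary, I would invoke Lemma~\ref{lem:iso_deferred}: any $\Fbr(\ty{w})$ with $\ty{w}$ of length $n$ is isomorphic (indeed conjugate in $\Vbr$) to $\Fbr(\ty{1}^n)=\Fbr(n)$, and type $\F_\infty$ is an isomorphism invariant, so $\Fbr(\ty{w})$ is of type $\F_\infty$ as well.

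I do not expect any genuine obstacle here — the real content lies in the earlier results, particularly Theorem~\ref{thrm:X_CAT0}/Corollary~\ref{cor:Xn_cible} (contractibility of $X(n)$) and Proposition~\ref{prop:Xn_dlk} (connectivity of descending links). The only thing requiring a moment's care is bookkeeping the inequality that turns ``$h(x)$ large'' into ``$\dlk_{X(n)}x$ is $(n-1)$-connected'', but this is a one-line floor-function estimate rather than a real difficulty.
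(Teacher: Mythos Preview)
Your proposal is correct and follows exactly the same approach as the paper: apply Lemma~\ref{lem:brown} to the action of $\Fbr(n)$ on $X(n)$, verifying the hypotheses via Corollary~\ref{cor:Xn_cible}, Lemma~\ref{lem:Fbrn_cocpt}, Lemma~\ref{lem:Fbrn_stabs}, and Proposition~\ref{prop:Xn_dlk}, and then deduce the general $\Fbr(\ty{w})$ case from Lemma~\ref{lem:iso_deferred}. One small notational point: you use $n$ both for the fixed parameter in $\Fbr(n)$ and for the connectivity target from Lemma~\ref{lem:brown}; renaming the latter (say to $k$) would make the floor-function estimate cleaner, but this is purely cosmetic.
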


\begin{proof}
Consider the cellular action of $\Fbr(n)$ on $X(n)$. We claim that all the conditions of Lemma~\ref{lem:brown} are met. First, $X(n)$ is contractible by Corollary~\ref{cor:Xn_cible}. Moreover, the sublevel sets $X(n)^{h\le q}$ are $\Fbr(n)$-cocompact by Lemma~\ref{lem:Fbrn_cocpt} and the stabilizers in $\Fbr(n)$ of cubes in $X(n)$ are of type $\F_\infty$ by Lemma~\ref{lem:Fbrn_stabs}. Finally, the condition on descending links in Lemma~\ref{lem:brown} follows from Proposition~\ref{prop:Xn_dlk}. The result about $\Fbr(\ty{w})$ is immediate from Lemma~\ref{lem:iso_deferred}.
\end{proof}

\begin{remark}\label{rmk:Vbr_CAT0}
Since $\Vbr$ is perfect \cite{zaremsky18} its BNSR-invariants are all empty, so we are not especially interested in $\Vbr$ at the moment. It is worth mentioning here that the Stein--Farley complex $X(\Vbr)$ for $\Vbr$ is also $\CAT(0)$. This was known to the authors of \cite{bux16}, but was not recorded there because it was not strictly relevant. The proof proceeds analogously to the above proof that $X$ is $\CAT(0)$. See \cite{bux16} for the construction of $X(\Vbr)$ (there denoted $X$), and we leave it to the reader to check that its links are flag, by working through all the results analogous to the above ones for $X$.
\end{remark}

\section{The BNSR-invariants of $\Fbr$}\label{sec:bnsr_BF}

With the HNN-decompositions from Corollary~\ref{cor:Fbrn_hnn} in hand, and having proved in Corollary~\ref{cor:Fbrn_Finfty} that all the $\Fbr(\ty{w})$ are of type $\F_\infty$, we are now ready to compute all the $\Sigma^m(\Fbr)$. The first step is of course to figure out the character sphere $\Sigma(\Fbr)$. It is an easy exercise to abelianize the presentation for $\Fbr$ given in \cite[Theorem~5.2]{brady08} and see that we get $\Z^4$. Hence $\Hom(\Fbr,\R)\cong \R^4$ and $\Sigma(\Fbr)\cong S^3$. More precisely, a basis for $\Hom(\Fbr,\R)$ was given in \cite{zaremsky18}, described as follows. For a tree $T$ let $L(T)$ be the distance from the first leaf to the root, and let $R(T)$ be the distance from the last leaf to the root. For a pure braid $p\in PB_n$ recall that $\omega_{i,j}(p)$ is the winding number of the $i$th and $j$th strands of $p$. A basis for $\Hom(\Fbr,\R)$ is given by the four characters
\begin{align*}
\phi_{\ty{0}} \colon [T_-,p,T_+] &\mapsto L(T_+)-L(T_-)\\
\phi_{\ty{1}} \colon [T_-,p,T_+] &\mapsto R(T_+)-R(T_-)\\
\omega_0 \colon [T_-,p,T_+] &\mapsto \omega_{1,n}(p) \text{ if } p\in PB_n\\
\omega_1 \colon [T_-,p,T_+] &\mapsto \sum_{i=1}^{n-1} \omega_{i,i+1}(p) \text{ if } p\in PB_n \text{.}
\end{align*}

In \cite{zaremsky18} it was proved that
\[
\Sigma^1(\Fbr)=\Sigma(\Fbr)\setminus \{[\phi_{\ty{0}}],[\phi_{\ty{1}}]\}\text{.}
\]
Our main result is:

\begin{theorem}\label{thrm:BNSR_BF}
For $m\ge 2$, $\Sigma^m(\Fbr)$ consists of all points on the sphere $\Sigma(\Fbr)=S^3$ except for those in the convex hull of $[\phi_{\ty{0}}]$ and $[\phi_{\ty{1}}]$.
\end{theorem}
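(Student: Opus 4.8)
The plan is to establish the two inclusions $\Sigma^m(\Fbr)^c \subseteq \operatorname{conv}\{[\phi_{\ty{0}}],[\phi_{\ty{1}}]\}$ and $\operatorname{conv}\{[\phi_{\ty{0}}],[\phi_{\ty{1}}]\}\subseteq \Sigma^m(\Fbr)^c$ separately, using the Morse-theoretic truncation of the Stein--Farley complex for the first and the ascending HNN-structure for the second.

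\textbf{Showing most characters lie in $\Sigma^m(\Fbr)$.} By Lemma~\ref{lem:stein_truncate} the complex $X^{2m\le h\le 4m}$ is $(m-1)$-connected and $\Fbr$ acts on it cocompactly with cell stabilizers isomorphic to pure braid groups (the pure analogue of \cite[Lemma~2.6, Corollary~2.8]{bux16}), hence of type $\F_\infty$. So I would like to invoke Citation~\ref{cit:stab_restrict}: if $\chi$ is a character whose restriction to every cell stabilizer is non-trivial, then since pure braid groups are of type $\F_\infty$ and the relevant stabilizer-restricted invariants are handled by Citation~\ref{cit:center_survives}, we get $[\chi]\in\Sigma^m(\Fbr)$. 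The point is that a cell stabilizer is (conjugate into) some $PB_n$ sitting inside $\Fbr$ as $\{[T,pqp^{-1},T]\}$, and its center contains the full-twist element $\Delta_n^2$; recalling that $\omega_{i,j}(\Delta_n^2)=1$ for all $i,j$, the restriction of $\chi$ to this center is non-trivial precisely when $\chi$ is non-trivial on $\omega_0$ or on the $\omega_{i,i+1}$ appearing in $\omega_1$ --- i.e.\ whenever $\chi$ is not a combination of $\phi_{\ty{0}}$ and $\phi_{\ty{1}}$ alone. Thus for every $[\chi]$ outside the span of $\{[\phi_{\ty{0}}],[\phi_{\ty{1}}]\}$ (hence in particular outside their convex hull), Citation~\ref{cit:center_survives} gives $[\chi|_{\Stab}] \in \Sigma^{\infty}$ of the stabilizer and Citation~\ref{cit:stab_restrict} yields $[\chi]\in\Sigma^m(\Fbr)$ for all $m$. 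One must be slightly careful about cells whose stabilizer restriction is trivial --- but those are exactly the $\chi$ in $\langle\phi_{\ty{0}},\phi_{\ty{1}}\rangle$, which I am not claiming here --- and about boundary issues at $\dim\sigma=m$ in Citation~\ref{cit:stab_restrict}, which are fine since all stabilizers are $\F_\infty$.

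\textbf{Showing the convex hull avoids $\Sigma^m(\Fbr)$ for $m\ge2$.} The characters in $\operatorname{conv}\{[\phi_{\ty{0}}],[\phi_{\ty{1}}]\}$ are (up to scaling) $a\phi_{\ty{0}}+b\phi_{\ty{1}}$ with $a,b\ge 0$, not both zero; note $\omega_0,\omega_1$ vanish on these. I would split into the "corners" $[\phi_{\ty{0}}]$, $[\phi_{\ty{1}}]$ and the "interior" $a,b>0$. For $-\phi_{\ty{1}}$ (equivalently $\phi_{\ty{1}}$ after noting the HNN-decomposition uses the opposite sign): by Corollary~\ref{cor:Fbrn_hnn}, $\Fbr=\Fbr(\ty{1})*_{x_\varnothing}$ is a strictly ascending HNN-extension, and $\phi_{\ty{1}}$ is (a multiple of) the character sending $\Fbr(\ty{1})\mapsto 0$, $x_\varnothing\mapsto 1$. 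Actually here I want the \emph{negative} direction to fail: iterating, $\Fbr=\Fbr(\ty{1}^m)*_{\cdots}$, and I expect to show $[-\phi_{\ty{1}}]\notin\Sigma^m$ by the standard Bieri--Renz/Brown-type argument that a strictly ascending HNN-extension has its "contracting" character outside $\Sigma^1$, and more refinedly outside $\Sigma^m$, via Citation~\ref{cit:hi_dim_quotients} applied to the retraction $\Fbr\to\Fbr/N$ onto a suitable quotient (e.g.\ a copy of Thompson's $F$ or a metabelian quotient) where the corresponding character is already known to be outside $\Sigma^m$. The cleanest route is probably: $\Fbr$ retracts onto $F$ (kill the pure braiding), $\phi_{\ty{0}},\phi_{\ty{1}}$ factor through this retraction as the two standard characters of $F$, and $\Sigma^m(F)^c = \{[\chi_0],[\chi_1]\}$ for all $m$ while $\Sigma^\infty$ of the ascending-HNN side handles the rest --- wait, that gives only the two corners. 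For the interior points $a\phi_{\ty{0}}+b\phi_{\ty{1}}$ with $a,b>0$, these map under $\Fbr\to F$ to interior points of $\operatorname{conv}\{[\chi_0],[\chi_1]\}\subseteq\Sigma(F)$, and by the computation of $\Sigma^m(F)$ (see \cite{zaremsky17} for $F_{n,\infty}$-type behavior, and the classical fact $\Sigma^2(F)^c=\operatorname{conv}\{[\chi_0],[\chi_1]\}$) these lie outside $\Sigma^m(F)$ for $m\ge2$; since $\Fbr\to F$ splits, Citation~\ref{cit:hi_dim_quotients} forces $a\phi_{\ty{0}}+b\phi_{\ty{1}}$ outside $\Sigma^m(\Fbr)$. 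This disposes of the whole convex hull, corners included, in one stroke.

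\textbf{Main obstacle.} The delicate part is the first inclusion: verifying that Citation~\ref{cit:stab_restrict} really applies, i.e.\ that for $[\chi]$ outside $\langle\phi_{\ty{0}},\phi_{\ty{1}}\rangle$ the restriction of $\chi$ to \emph{every} cell stabilizer (not just vertex stabilizers) is non-trivial. This requires knowing precisely which $PB_n$-subgroups arise as stabilizers and that each contains a central full-twist on which $\omega_0$ (or the sum defining $\omega_1$) is non-zero --- concretely, that a $0$-cube $[T,p,F]_{PB}$ with $h=n$ has stabilizer $\cong PB_n$ whose center maps nontrivially under both $\omega_0$ and $\omega_1$ --- and that higher cells have the same stabilizer as their bottom vertex (the pure version of \cite[Corollary~2.8]{bux16}). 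Once the stabilizer structure is pinned down, Citation~\ref{cit:center_survives} does all the real work and the rest is bookkeeping; the $\CAT(0)$/connectivity input enters only through Lemma~\ref{lem:stein_truncate} and the fact (from Corollary~\ref{cor:Fbrn_Finfty}, ultimately Theorem~\ref{thrm:X_CAT0}) that the pieces are $\F_\infty$. I would also double-check the sign conventions relating the HNN stable letter $x_\varnothing$ to $\phi_{\ty{0}},\phi_{\ty{1}}$, since it is easy to land on $[\phi_{\ty{1}}]$ when one means $[-\phi_{\ty{1}}]$.
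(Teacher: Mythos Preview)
Your argument has a genuine gap: you never handle the characters $\chi=a\phi_{\ty{0}}+b\phi_{\ty{1}}$ with $a<0$ or $b<0$. Your first block only treats $[\chi]$ outside the \emph{span} of $\phi_{\ty{0}},\phi_{\ty{1}}$ (i.e.\ $c\ne0$ or $d\ne0$), and your second block only shows the convex hull lies in $\Sigma^m(\Fbr)^c$. That leaves the entire arc of the great circle through $[\phi_{\ty{0}}]$ and $[\phi_{\ty{1}}]$ complementary to the convex hull unaccounted for. These characters have $c=d=0$, so the center-of-stabilizer trick is useless for them: $\chi$ vanishes on every $[T,\Delta_n^2,T]$, indeed on all of $\Stab_{\Fbr}(x)$, and Citation~\ref{cit:stab_restrict} does not apply. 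The paper's Case~3 is exactly this, and it is where the machinery of Sections~\ref{sec:deferred}--\ref{sec:arc} actually earns its keep: one uses the HNN-decompositions $\Fbr=\Fbr(\ty{0})*_{x_\varnothing^{-1}}$ and $\Fbr=\Fbr(\ty{1})*_{x_\varnothing}$ from Corollary~\ref{cor:Fbrn_hnn}, together with Citation~\ref{cit:hnn_restrict}, to reduce to showing $[-\phi_{\ty{0}}|_{\Fbr(\ty{0})}]\in\Sigma^m(\Fbr(\ty{0}))$ and $[-\phi_{\ty{1}}|_{\Fbr(\ty{1})}]\in\Sigma^m(\Fbr(\ty{1}))$, which in turn follow from a further HNN-decomposition and Citation~\ref{cit:hnn_kill}. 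All of this requires knowing that the base groups $\Fbr(\ty{w})$ are of type $\F_\infty$ (Corollary~\ref{cor:Fbrn_Finfty}), which is the reason the $\CAT(0)$ result and the $X(n)$ complexes exist in the paper at all. Your sketch mentions HNN extensions only in the context of trying (and then abandoning) an approach to the \emph{negative} result, and your ``main obstacle'' paragraph misidentifies where the difficulty lies.

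There is also a smaller issue in your first block. You assert that for $\chi$ with $c\ne0$ or $d\ne0$, the restriction to the center of every cell stabilizer is non-trivial. But $\chi([T,\Delta_n^2,T])=c+(n-1)d$, which can vanish for particular values of $n$ even when $(c,d)\ne(0,0)$. The paper fixes this by choosing the truncation parameter $q\ge m$ so that $c+(n-1)d\ne0$ for all $n$ in the window $2q\le n\le 4q$; since at most one $n$ satisfies $c+(n-1)d=0$, this is always possible. You should make this explicit rather than claim non-triviality holds automatically.
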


\begin{proof}
Fix $m\ge 2$. Let $[\chi]\in \Sigma(\Fbr)$, say
\[
\chi=a\phi_{\ty{0}} + b\phi_{\ty{1}} + c\omega_0 + d\omega_1 \text{.}
\]
We will consider a variety of cases, for $[\chi]$ lying in various regions of $\Sigma(\Fbr)$. Different cases will require quite different tools.

\textbf{Case 1:} First suppose $c\ne0$ or $d\ne0$. We claim $[\chi]\in\Sigma^m(\Fbr)$, and will use the complexes from Lemma~\ref{lem:stein_truncate}. Choose $q\ge m$ such that $c+(n-1)d \ne 0$ for any $2q\le n\le 4q$. We would like to apply Citation~\ref{cit:stab_restrict} to the cocompact action of $\Fbr$ on the $(q-1)$-connected (by Lemma~\ref{lem:stein_truncate}), hence $(m-1)$-connected space $X^{2q\le h\le 4q}$, so we need to understand how $\chi$ restricts to stabilizers. A consequence of the pure version of \cite[Corollary~2.8]{bux16} is that the stabilizer in $\Fbr$ of any cube in $X^{2q\le h\le 4q}$ is isomorphic to the stabilizer of its bottom $0$-face $x$. Hence if we can prove that for every $0$-cube $x$ in $X^{2q\le h\le 4q}$ the character $\chi$ restricts non-trivially to $\Stab_{\Fbr}(x)$ and the restriction lies in $\Sigma^m(\Stab_{\Fbr}(x))$, then Citation~\ref{cit:stab_restrict} will say that $[\chi]\in\Sigma^m(\Fbr)$. By Citation~\ref{cit:center_survives} it suffices to find an element of the center $Z(\Stab_{\Fbr}(x))$ of the stabilizer that is not killed by $\chi$. Since $\chi$ reads the same value on conjugates, and since $\Fbr$ acts transitively on $0$-cubes of $X$ with a given $h$-value, without loss of generality $x=[T,1,1]_{PB}$. The stabilizer of this $0$-cube is $PB_T=\{[T,p,T]\mid p\in PB_n\}\cong PB_n$, where $T$ has $n$ leaves, and the element $[T,\Delta_n^2,T]$ is central in this stabilizer. We have
\[
\chi([T,\Delta_n^2,T]) = c\omega_{1,n}(\Delta_n^2) + d\sum_{i=1}^{n-1}\omega_{i,i+1}(\Delta_n^2) = c+(n-1)d \ne 0 \text{,}
\]
so we are done with this case.

\textbf{Case 2:} Now assume $c=d=0$ and $a,b\ge0$, so $[\chi]$ lies in the convex hull of $[\phi_{\ty{0}}]$ and $[\phi_{\ty{1}}]$. We claim $[\chi]\in\Sigma^m(\Fbr)^c$. Since $c=d=0$, $\chi$ is induced from the split epimorphism $\Fbr \to F$, namely $\chi$ is this map composed with the character $a\chi_{\ty{0}}+b\chi_{\ty{1}}$ of $F$, where $\chi_i=\phi_i|_F$ ($i\in\{\ty{0},\ty{1}\}$). Since $m\ge 2$, the computation of $\Sigma^m(F)$ from \cite{bieri10} tells us that $[a\chi_{\ty{0}}+b\chi_{\ty{1}}]\in \Sigma^m(F)^c$. Now Citation~\ref{cit:hi_dim_quotients} implies that $[\chi]\in\Sigma^m(\Fbr)^c$.

Before considering the last case, we first need some specific results for $\Fbr(\ty{0})$ and $\Fbr(\ty{1})$, namely that $[-\phi_{\ty{0}}|_{\Fbr(\ty{0})}]\in\Sigma^m(\Fbr(\ty{0}))$ and $[-\phi_{\ty{1}}|_{\Fbr(\ty{1})}]\in\Sigma^m(\Fbr(\ty{1}))$. By Corollary~\ref{cor:Fbrn_hnn} we have $\Fbr(\ty{0})=\Fbr(\ty{01})*_{x_{\ty{0}}}$, and since $-\phi_{\ty{0}}|_{\Fbr(\ty{0})}(\Fbr(\ty{01}))=0$ and $-\phi_{\ty{0}}|_{\Fbr(\ty{0})}(x_{\ty{0}})=1$ we get  from Corollary~\ref{cor:Fbrn_Finfty} and Citation~\ref{cit:hnn_kill} that $[-\phi_{\ty{0}}|_{\Fbr(\ty{0})}]\in\Sigma^m(\Fbr(\ty{0}))$. Similarly we have $\Fbr(\ty{1})=\Fbr(\ty{10})*_{x_{\ty{1}}^{-1}}$, and since $-\phi_{\ty{1}}|_{\Fbr(\ty{1})}(\Fbr(\ty{10}))=0$ and $-\phi_{\ty{1}}|_{\Fbr(\ty{1})}(x_{\ty{1}})=1$ we get that $[-\phi_{\ty{1}}|_{\Fbr(\ty{1})}]\in\Sigma^m(\Fbr(\ty{1}))$.

\textbf{Case 3:} Now assume $c=d=0$, and $a<0$ or $b<0$. We claim $[\chi]\in\Sigma^m(\Fbr)$. Our strategy is inspired by the proof of Corollary~2.4 in \cite{bieri10}. First suppose $a<0$. By Corollary~\ref{cor:Fbrn_hnn} we have $\Fbr=\Fbr(\ty{0})*_{x_{\varnothing}^{-1}}$. By Corollary~\ref{cor:Fbrn_Finfty} and Citation~\ref{cit:hnn_restrict} it suffices to prove that $[\chi|_{\Fbr(\ty{0})}]\in\Sigma^m(\Fbr(\ty{0}))$. But $\chi|_{\Fbr(\ty{0})}=a\phi_{\ty{0}}|_{\Fbr(\ty{0})}$, so $[\chi|_{\Fbr(\ty{0})}]=[-\phi_{\ty{0}}|_{\Fbr(\ty{0})}]$ and as we just saw this indeed lies in $\Sigma^m(\Fbr(\ty{0}))$. Now suppose $b<0$, and we proceed similarly. We have $\Fbr=\Fbr(\ty{1})*_{x_{\varnothing}}$, so it suffices to prove that $[\chi|_{\Fbr(\ty{1})}]\in\Sigma^m(\Fbr(\ty{1}))$. But $\chi|_{\Fbr(\ty{1})}=b\phi_{\ty{1}}|_{\Fbr(\ty{1})}$, so $[\chi|_{\Fbr(\ty{1})}]=[-\phi_{\ty{1}}|_{\Fbr(\ty{1})}]$ and as we just saw this indeed lies in $\Sigma^m(\Fbr(\ty{1}))$.
\end{proof}

An immediate consequence of Theorem~\ref{thrm:BNSR_BF} is the following.

\begin{corollary}\label{cor:fp_Finfty}
Let $H$ be a subgroup of $\Fbr$ containing $[\Fbr,\Fbr]$. Then $H$ is finitely presented if and only if it is of type $\F_\infty$ if and only if no non-trivial character of the form $a\phi_{\ty{0}}+b\phi_{\ty{1}}$ for $a,b\ge0$ contains $H$ in its kernel.
\end{corollary}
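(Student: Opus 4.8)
The plan is to deduce this purely from Theorem~\ref{thrm:BNSR_BF} together with Citation~\ref{cit:fin_props}, the standard characterization of which subgroups containing the commutator subgroup have type $\F_m$. First I would record the trivial implications: a group of type $\F_\infty$ is in particular of type $\F_2$, i.e.\ finitely presented, so the middle condition follows from the last. It remains to show that if $H$ is merely finitely presented then it is already of type $\F_\infty$, and to identify the kernel condition that governs this.

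The key step is to translate "no non-trivial character of the form $a\phi_{\ty{0}}+b\phi_{\ty{1}}$ with $a,b\ge 0$ kills $H$" into a statement about $\Sigma^m(\Fbr)$. By Citation~\ref{cit:fin_props}, $H$ is of type $\F_m$ if and only if every $[\chi]\in\Sigma(\Fbr)$ with $\chi(H)=0$ lies in $\Sigma^m(\Fbr)$. Now I would invoke Theorem~\ref{thrm:BNSR_BF}: for $m\ge 2$ the complement $\Sigma^m(\Fbr)^c$ is exactly the convex hull of $[\phi_{\ty{0}}]$ and $[\phi_{\ty{1}}]$, which is the set of classes of non-trivial characters of the form $a\phi_{\ty{0}}+b\phi_{\ty{1}}$ with $a,b\ge 0$. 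Crucially this set is the same for every $m\ge 2$. So the condition "$\chi(H)=0\implies[\chi]\in\Sigma^m(\Fbr)$" is, for each fixed $m\ge 2$, literally the same condition, namely "no non-trivial $\chi$ with $\chi(H)=0$ has the form $a\phi_{\ty{0}}+b\phi_{\ty{1}}$ with $a,b\ge 0$."

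Putting this together: suppose $H$ is finitely presented, i.e.\ of type $\F_2$. Since $[\Fbr,\Fbr]\le H$ and $\Fbr$ is of type $\F_\infty$ (hence in particular $\F_2$), Citation~\ref{cit:fin_props} with $m=2$ says every $[\chi]$ with $\chi(H)=0$ lies in $\Sigma^2(\Fbr)$, i.e.\ no such non-trivial $\chi$ has the form $a\phi_{\ty{0}}+b\phi_{\ty{1}}$ with $a,b\ge 0$. But by Theorem~\ref{thrm:BNSR_BF} the same set of excluded characters is $\Sigma^m(\Fbr)^c$ for every $m\ge 2$, so the hypothesis of Citation~\ref{cit:fin_props} is satisfied for all $m$, and therefore $H$ is of type $\F_m$ for all $m$, i.e.\ of type $\F_\infty$. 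This closes the cycle of equivalences. The only genuinely substantive ingredient is Theorem~\ref{thrm:BNSR_BF}, specifically the fact that the invariants stabilize at $m=2$; the rest is bookkeeping with the Bieri--Renz criterion and the elementary observation that "$a\phi_{\ty{0}}+b\phi_{\ty{1}}$ with $a,b\ge 0$, not both zero" is precisely a parametrization of the convex hull of $[\phi_{\ty{0}}]$ and $[\phi_{\ty{1}}]$ in $\Sigma(\Fbr)$. I do not anticipate a real obstacle here; the mild care needed is just to note that the $m=1$ case is genuinely different (the convex hull versus the two endpoints) and so one must start the argument at $m=2$.
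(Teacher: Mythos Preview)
Your proposal is correct and follows exactly the same approach as the paper: apply Citation~\ref{cit:fin_props} together with the fact, from Theorem~\ref{thrm:BNSR_BF}, that $\Sigma^2(\Fbr)=\Sigma^\infty(\Fbr)$ and that $\Sigma^m(\Fbr)^c$ for $m\ge 2$ is precisely the convex hull of $[\phi_{\ty{0}}]$ and $[\phi_{\ty{1}}]$. The paper's proof is just a compressed version of what you wrote.
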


\begin{proof}
First, Citation~\ref{cit:fin_props} says that $H$ is of type $\F_m$ if and only if no non-trivial character $\chi$ with $[\chi]\in\Sigma^m(\Fbr)^c$ contains $H$ in its kernel, so the first equivalence holds since $\Sigma^2(\Fbr)=\Sigma^\infty(\Fbr)$. The second equivalence holds by the explicit computation in Theorem~\ref{thrm:BNSR_BF}.
\end{proof}

\bibliographystyle{alpha}
\newcommand{\etalchar}[1]{$^{#1}$}

\end{document}